\numberwithin{equation}{section}
\def\cocoa{{\hbox{\rm C\kern-.13em o\kern-.07em C\kern-.13em o\kern-.15em A}}}
\newtheorem{theorem}{Theorem}[section]
\newtheorem{lemma}[theorem]{Lemma}
\newtheorem{proposition}[theorem]{Proposition}
\newtheorem{corollary}[theorem]{Corollary}
\theoremstyle{definition}
\newtheorem{remark}[theorem]{Remark}
\newtheorem{definition}[theorem]{Definition}
\newtheorem{example}[theorem]{Example}
\newcommand {\sHom}{\mathcal{H}\kern -0.25ex{\mathit om}}
\newcommand {\sExt}{\mathcal{E}\kern -0.25ex{\mathit xt}}
\newcommand {\sTor}{\mathcal{T}\kern -0.25ex{\mathit or}}
\newcommand {\rk}{\mathrm{rk}}
\newcommand {\Hilb}{\mathcal{H}\kern -0.25ex{\mathit ilb\/}}
\newcommand {\cA}{\mathcal{A}}
\newcommand {\cB}{\mathcal{B}}
\newcommand{\cE}{{\mathcal E}}
\newcommand{\cF}{{\mathcal F}}
\newcommand{\cN}{{\mathcal N}}
\newcommand{\cO}{{\mathcal O}}
\newcommand{\cI}{{\mathcal I}}
\newcommand {\bZ}{\mathbb{Z}}
\newcommand {\bC}{\mathbb{C}}
\newcommand {\bP}{\mathbb{P}}
\newcommand{\Pic}{\operatorname{Pic}}
\def\p#1{{\bP^{#1}}}
\title[A remark by Daniel Ferrand on bundles on Fano threefolds]{A remark by Daniel Ferrand on bundles on Fano threefolds}
\author[G. Casnati, A.F. Lopez]{Gianfranco Casnati* and Angelo Felice Lopez**}
\address{\hskip -.43cm Gianfranco Casnati, Dipartimento di Scienze Matematiche, Politecnico di Torino, c.so Duca degli Abruzzi 24,
10129 Torino, Italy. email: {\tt gianfranco.casnati@polito.it}}
\address{\hskip -.43cm Angelo Felice Lopez, Dipartimento di Matematica e Fisica, Universit\`a di Roma
Tre, Largo San Leonardo Murialdo 1, 00146, Roma, Italy. e-mail {\tt angelo.lopez@uniroma3.it}}
\thanks{* The  author is a member of GNSAGA group of INdAM and partially supported by the framework of the MIUR grant Dipartimenti di Eccellenza 2018-2022 (E11G18000350001).}
\thanks{** Research partially supported by PRIN ``Advances in Moduli Theory and Birational Classification'', GNSAGA--INdAM and the MIUR grant Dipartimenti di Eccellenza 2018-2022.}
\thanks{{\it Mathematics Subject Classification}: Primary 14J45. Secondary: 14D21, 14J60.}
\begin{document}

\begin{abstract}
Let $X$ be a Fano threefold with index $i_X$ and fundamental line bundle $\cO_X(h)$. We study $\mu$--semistable rank two bundles $\cE$ on $X$ with $c_1(\cE)=0$, $h^0(\cE) \ne 0$ and $h^1(\cE(-\lceil\frac{i_X}{2}\rceil h))=0$.
\end{abstract}

\maketitle

\section{Introduction}

An instanton bundle on the projective space $\p3$ over the complex field $\bC$ is a rank two bundle such that $h^0(\cE)=h^1(\cE(-2))=0$ and $c_1(\cE)=0$.

Instanton bundles on $\p3$ were first introduced in the seminal paper \cite{A--D--H--M} and widely studied from different viewpoints since the discovery of their connection, through the Atiyah--Penrose--Ward transformation, with the solutions of the Yang--Mills equations (see \cite{A--W}). 

The projective space $\p3$ is the very first example of {\sl Fano threefold}, that is of a smooth threefold $X$ whose anticanonical line bundle $\omega_X^{-1}$ is ample: see \cite{I--P} for a survey on the classification and properties of such threefolds. In particular, the {\sl index of $X$} is the greatest $i_X\in\bZ$ such that $\omega_X\cong\cO_X(-i_Xh)$ for an ample $\cO_X(h) \in \Pic(X)$. It is well known that $1\le i_X\le 4$ and that such an $\cO_X(h)$ is uniquely determined: it is called the {\sl fundamental line bundle} of $X$. Notice that $i_X=4$ if and only if $X\cong\p3$ and $i_X=3$ if and only if $X\subset \p4$ is a smooth quadric.

Because of its importance, the notion of instanton bundle has been extended to Fano threefolds. In \cite{Fa2, Kuz} the authors defined for the first time instanton bundles on Fano threefolds $X$ with Picard number $\varrho_X=1$. In a second stage such a restriction has been removed (e.g. see \cite{M--M--PL, C--C--G--M}). In particular, in \cite{A--C--G2} (see also \cite{An--Cs}) the following general definition has been suggested. In order to fix the notation, if $\varepsilon \in \{0, 1\}$ we set 
$$
q_X^\varepsilon=\Bigl\lceil\frac{i_X-\varepsilon}{2}\Bigr\rceil.
$$
An {\sl instanton bundle} $\cE$ on $X$ is a rank two bundle $\cE$ such that the following properties hold:
\begin{itemize}
\item $c_1(\cE)=-\varepsilon h$, where $\varepsilon \in \{0, 1\}$;
\item $\cE$ is $\mu$--semistable with respect to $\cO_X(h)$ and $(1-\varepsilon)h^0(\cE)=0$;
\item $h^1(\cE(-q_X^\varepsilon h))=0$.
\end{itemize}
The instanton bundle  $\cE$ with $c_1(\cE)=-\varepsilon h$ is called even or odd according to the parity of $\varepsilon$.

On the one hand, the vanishing $(1-\varepsilon)h^0(\cE)=0$ is equivalent to the $\mu$--stability of $\cE$ when $X$ has Picard number $\varrho_X=1$ and $\cE$ is even. On the other hand, if $\varrho_X=1$ and $\cE$ is odd, then the notions of $\mu$--stability and $\mu$--semistability actually coincide (see for example Lemma \ref{lHoppe}). This is no longer true if $\varrho_X\ge2$, because many pathologies can occur in this case, as showed in \cite{M--M--PL, An--Ma1}.

The above discussion motivates our interest in studying rank two bundles $\cE$ on a Fano threefold $X$ such that $c_1(\cE)=0$, $h^1(\cE(-q_X^0 h))=0$, $h^0(\cE)\ne 0$, and which are $\mu$--semistable with respect to $\cO_X(h)$. 

Certainly $\cO_X^{\oplus2}$ is strictly $\mu$--semistable, it satisfies the above restrictions and it always occurs on each Fano threefold. Hence our interest is focused on the case $\cE\not\cong\cO_X^{\oplus2}$: in this case  we are able to prove the result below in Section \ref{sFerrand}.

\begin{theorem}
\label{tFerrand}
Let $X$ be a smooth Fano threefold with fundamental line bundle $\cO_X(h)$. 

Let $\cE \not \cong\cO_X^{\oplus 2}$ be a $\mu$--semistable rank two bundle on $X$ with $c_1(\cE)=0$, $h^0(\cE) \ne 0$ and $h^1(\cE(-q_X^0h))=0$.

Then $i_X \le 3$, $\cE$ is indecomposable, $h^0(\cE)=1$ and each non--zero section of $\cE$ vanishes exactly on the same locally complete intersection curve $Z\subset X$ such that $\omega_Z\cong\cO_Z\otimes\cO_X(-i_Xh)$. 
Moreover, the following assertions hold:
\begin{enumerate}
\item If $i_X=3$, then the reduced structure of $Z$ is a union of disjoint $h$--lines, $Z$ is everywhere nonreduced and each connected component of $Z$ has even $h$--degree.
\item If $i_X=2$, then the reduced structure of $Z$ is a union of disjoint $h$--lines. Moreover, let $\overline Z$ be any connected component of $Z$ and let $L=\overline Z_{red}$ be the $h$--line. Then, either $\overline Z =L$ is reduced or the extension $L \subset \overline Z$ is quasi-primitive and $\overline Z \cong \bP^1 \times {\rm Spec}(\mathbb C[x]/x^{k+1})$ or the extension is thick and the multiple structure is defined by successive extensions $L=Z_0 \subset Z_1 \subset \ldots \subset Z_k=\overline Z$ with 
$$0 \to \cO_L^{\oplus r_j} \to \cO_{Z_j} \to \cO_{Z_{j-1}} \to 0$$
for $1 \le j \le k$ and $r_1=2$.
\item If $i_X=1$, then each connected component of $Z$ has even $h$--degree and its reduced structure is a  nodal seminormal tree of smooth rational curves with each vertex of degree at most $3$.
\end{enumerate}
\end{theorem}
We note that, in order to get a classification result for bundles as in the above theorem, a description of the possible multiple structures would be needed. Aside from the case $i_X=2$, some more cases are studied in Section \ref{sQuadric}. 

Throughout the whole paper we work over the complex numbers. By {\sl curve (inside a scheme $X$)} we mean any closed subscheme of pure dimension $1$ contained in $X$. Moreover, for the notions of $h$--degree, $h$--line and seminormal tree of smooth rational curves we refer the reader to Definitions \ref{tr} and \ref{tr2} respectively.

When $\varrho_X=1$, an immediate corollary of the above theorem gives a complete characterization of strictly $\mu$--semistable rank two bundles $\cE$ on $X$ with $c_1(\cE)=0$ and $h^1(\cE(-q_X^0h))=0$, because in this case it is automatically true that $h^0(\cE)\ne0$ (see Lemma \ref{lHoppe}). 

In particular, the above result is well known when $X\cong\p3$ (and this motivated the title of the paper): it is a remark by Daniel Ferrand, see \cite[Remark at page 136]{O--S--S} where a draft of its proof is given in this particular case. Moreover, it is also well known that the similar problem of classifying semistable (in the sense of Gieseker) but not $\mu$--stable rank two bundles $\cE$ on $X$ with $c_1(\cE)=0$ leads immediately to the isomorphism $\cE\cong\cO_X^{\oplus2}$: see Remark \ref{rGieseker}.

In Section \ref{sExample} we first collect several examples of rank two bundles showing that all the cases above actually occur when $i_X \ge 2$. We close that section with Example \ref{prime} where we deal with the case $i_X=1$ and $\varrho_X=1$, which needs some more care. Indeed, in this case, the picture is slightly less precise than the case $i_X \ge 2$, because, as we show, it is not possible to find sharper bounds on the degree of the reduced structure of the connected components of $Z$. 

Finally, in Section \ref{sQuadric}, we carefully deal with strictly $\mu$--semistable  rank two bundles $\cE$ on the smooth quadric hypersurface $X\subset\p4$ with $c_1(\cE)=0$ and $c_2(\cE)h=2$. In particular we show that, in this case, the double structure is described by Ferrand's construction, see Remark \ref{Ferrand}. 

\section{General results}

In this section we recall some definitions and general facts which will be used later. 

For the reader's convenience we recall the Riemann--Roch formula for a locally free sheaf $\cA$ on a smooth threefold $X$:

\begin{equation}
\label{RRgeneral}
\begin{aligned}
\chi(\cA)&=\rk(\cA)\chi(\cO_X)+{\frac16}(c_1(\cA)^3-3c_1(\cA)c_2(\cA)+3c_3(\cA)) \\
&-{\frac14}(\omega_Xc_1(\cA)^2-2\omega_Xc_2(\cA))+{\frac1{12}}(\omega_X^2c_1(\cA)+c_2(\Omega^1_{X}) c_1(\cA)).
\end{aligned}
\end{equation}

In what follows we recall some helpful definitions and facts concerning the stability notion of vector bundles.

\begin{definition}
\label{dStable}
Let $X$ be a smooth irreducible variety of dimension $n\ge1$ endowed with an ample line bundle $\cO_X(H)$.

If $\cA$ is any torsion--free sheaf we define the {\sl slope of $\cA$ (with respect to $\cO_X(H)$)} as
$$
\mu(\cA):=\frac{c_1(\cA)H^{n-1}}{\rk(\cA)}.
$$
The torsion--free sheaf $\cA$ is {\sl $\mu$--semistable} (resp. {\sl $\mu$--stable}) if for all proper subsheaves $\cB$ with $0<\rk(\cB)<\rk(\cA)$ we have $\mu(\cB) \le \mu(\cA)$ (resp. $\mu(\cB)<\mu(\cA)$). 
\end{definition}

The problem of determining when a vector bundle $\cE$ on a variety $X$ is $\mu$--stable, or even only $\mu$--semistable, is in general hard without any further assumptions on $X$ and $\cE$. On the other hand, when $\varrho_X=1$, we have the following well known result.

\begin{lemma}
\label{lHoppe}
Let $X$ be a smooth irreducible variety of dimension $n\ge1$ and let $\cO_X(H)$ be an ample line bundle which generates $\Pic(X)$.

If $\cE$ is a rank two bundle on $X$, then the following assertion hold:
\begin{enumerate}
\item if $c_1(\cE) \in \{0, -H\}$, then $\cE$ is $\mu$--stable if and only if $h^0(\cE)=0$;
\item if $c_1(\cE)=0$, then $\cE$  is $\mu$--semistable  if and only if $h^0(\cE(-H))=0$; 
\item if $c_1(\cE)=-H$, then $\cE$ is $\mu$--stable if and only if it is $\mu$--semistable.
\end{enumerate}
\end{lemma}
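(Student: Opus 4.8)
The plan is to reduce every assertion to the existence of line \emph{sub}bundles, exploiting the hypothesis that $\Pic(X)$ is generated by $\cO_X(H)$. First I would recall that to test the $\mu$--(semi)stability of a rank two bundle it suffices to consider rank one subsheaves, and that among these the extremal slope is already attained by \emph{saturated} ones. If $\cB\subset\cE$ is a rank one subsheaf, its saturation $\overline{\cB}$ (the preimage of the torsion of $\cE/\cB$) is reflexive of rank one, hence a line bundle because $X$ is smooth; since $\overline{\cB}/\cB$ is supported in codimension one with effective first Chern class, one gets $\mu(\overline{\cB})\ge\mu(\cB)$. As $\Pic(X)=\bZ\cdot\cO_X(H)$, every such $\overline{\cB}$ equals $\cO_X(dH)$ for some $d\in\bZ$, and a nonzero inclusion $\cO_X(dH)\hookrightarrow\cE$ is exactly a nonzero element of $H^0(\cE(-dH))$. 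Writing $c_1(\cE)=c_1H$ with $c_1\in\bZ$, one computes $\mu(\cO_X(dH))=dH^n$ and $\mu(\cE)=\tfrac{c_1}2H^n$, where $H^n>0$ since $\cO_X(H)$ is ample.

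The second ingredient is a monotonicity coming from the effectivity of $\cO_X(H)$: multiplication by a nonzero section $s\in H^0(\cO_X(H))$ gives, for every $d$, an injection $\cE(-(d+1)H)\hookrightarrow\cE(-dH)$, so $h^0(\cE(-(d+1)H))\neq0$ forces $h^0(\cE(-dH))\neq0$. In particular $h^0(\cE(-dH))\neq0$ for some $d\ge0$ already implies $h^0(\cE)\neq0$, and $h^0(\cE(-dH))\neq0$ for some $d\ge1$ implies $h^0(\cE(-H))\neq0$.

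With these reductions the three statements become book-keeping on the integer $d$. For (1), if $c_1(\cE)\in\{0,-H\}$ then $\mu(\cE)\in\{0,-\tfrac12H^n\}$, and a line subbundle $\cO_X(dH)$ violates stability precisely when $dH^n\ge\mu(\cE)$, that is when $d\ge0$; by monotonicity this happens for some such $d$ if and only if $h^0(\cE)\neq0$, so $\cE$ is $\mu$--stable iff $h^0(\cE)=0$. For (2), with $c_1(\cE)=0$ one has $\mu(\cE)=0$, and semistability fails precisely when some $\cO_X(dH)$ with $dH^n>0$, i.e. $d\ge1$, maps nonzero to $\cE$; again by monotonicity this occurs iff $h^0(\cE(-H))\neq0$, giving the claimed equivalence. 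For (3), with $c_1(\cE)=-H$ one has $\mu(\cE)=-\tfrac12H^n$, a value not of the form $dH^n$ for any integer $d$; hence no saturated rank one subsheaf can have slope \emph{equal} to $\mu(\cE)$, so the inequality $\mu(\cB)\le\mu(\cE)$ defining semistability is automatically strict and $\mu$--semistability coincides with $\mu$--stability.

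The only genuinely delicate point is the reduction of the first paragraph: that it is enough to test (semi)stability on saturated rank one subsheaves and that these are honest line bundles $\cO_X(dH)$. This rests on the smoothness of $X$ (to ensure the saturation is reflexive, hence locally free) and on $\varrho_X=1$ (to force that line bundle into the class of $\cO_X(H)$). Once this is in place, everything else is the elementary numerology above, driven entirely by the integrality of $c_1$ relative to the rank two.
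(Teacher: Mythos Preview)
Your proof is correct and follows exactly the Hoppe-type argument to which the paper defers by citing \cite[Lemmas II.1.2.3 and II.1.2.5]{O--S--S}; you have simply written out in full what that reference does for $\p n$ and observed, as the paper remarks, that the same reasoning works whenever $\Pic(X)$ is generated by an ample effective class.
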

\begin{proof}
See \cite[Lemmas II.1.2.3 and II.1.2.5]{O--S--S}. Though  the results are stated therein for $X=\p n$, it is easy to check that the proofs hold for every $X$ and $H$ as in the statement, when $H$ is effective. Passing to $mH, m \gg 0$, one gets the general case of $H$ ample. The lemma also follows easily by \cite[Corollary 4]{J--M--P--SE}.
\end{proof}

We close this section recalling what we need about the relation between subschemes of codimension $2$ and rank two bundles. To this purpose we observe that for each subscheme $Z\subset X$ there exists the exact sequence 
\begin{equation}
\label{seqStandard}
0 \longrightarrow \cI_{Z/X} \longrightarrow \cO_X \longrightarrow \cO_Z \longrightarrow 0.
\end{equation}
Let $\cF$ be a rank two bundle on a smooth irreducible variety $X$ of dimension $n \ge 2$ and let $0 \ne s \in H^0(\cF)$. In general, its {\sl zero--locus} $(s)_0\subset X$ is either empty or its codimension is at most
$2$. We can always write 
\begin{equation}
\label{sez}
(s)_0=Y\cup Z
\end{equation}
where $Z$ has pure codimension $2$ or it is empty, and $Y$ has pure codimension $1$ or it is empty. In particular $\cF(-Y)$ has a non--zero section vanishing on $Z$, thus we can consider the following exact sequence induced by its Koszul complex 
\begin{equation}
\label{seqSerre}
0 \longrightarrow \cO_X(Y) \longrightarrow \cF \longrightarrow \cI_{Z/X}(-Y+c_1(\cF)) \longrightarrow 0.
\end{equation}
Tensoring \eqref{seqSerre} by $\cO_Z$ yields $\cI_{Z/X}/\cI^2_{Z/X}\cong\cF^\vee(Y)\otimes\cO_Z$, whence the normal bundle of $Z$ inside $X$ satisfies $\cN_{Z/X}\cong\cF(-Y)\otimes\cO_Z$. If $Y=0$, then $Z$ is locally complete intersection inside $X$, because $\rk(\cF)=2$. In particular, it has no embedded components and the adjunction formula
\begin{equation}
\label{omega1}
\omega_Z\cong\cO_Z\otimes\omega_X(c_1(\cF))
\end{equation}
holds for such a scheme as well (see \cite[Proof of Proposition III.7.2, page 180]{H}).

The Serre correspondence allows us to revert the above construction as follows.

\begin{theorem}
\label{tSerre}
Let $X$ be a smooth irreducible variety $X$ of dimension $n \ge 2$ and let $Z\subset X$ be a locally complete intersection subscheme of pure codimension $2$.
  
If $\det(\cN_{Z/X})\cong\cO_Z\otimes\mathcal L$ for some $\mathcal L\in\Pic(X)$ such that $H^2(\mathcal L^\vee)=0$, then there exists a rank two bundle $\cF$ on $X$ such that:
\begin{enumerate}
\item $\det(\cF)\cong\mathcal L$;
\item $\cF$ has a global section $s$ such that $Z$ coincides with the zero locus $(s)_0$ of $s$.
\end{enumerate}
Moreover, if $H^1({\mathcal L}^\vee)= 0$, the above two conditions  determine $\cF$ up to isomorphism.
\end{theorem}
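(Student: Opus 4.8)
The plan is to prove the Serre correspondence (Theorem \ref{tSerre}) via the standard dictionary between rank two bundles with a section and their zero loci, controlled by local and global $\mathrm{Ext}$ computations. First I would reduce the determinant hypothesis to the vanishing condition needed to build $\cF$. Since $Z\subset X$ is a local complete intersection of pure codimension $2$, the conormal sheaf $\cI_{Z/X}/\cI_{Z/X}^2$ is locally free of rank two on $Z$, so $\det(\cN_{Z/X})\cong\sExt^2_{\cO_X}(\cO_Z,\cO_X)\otimes\omega_X^\vee$ via the local fundamental isomorphism for a Koszul-regular sequence; I would record this identification carefully because it is what converts the assumption $\det(\cN_{Z/X})\cong\cO_Z\otimes\mathcal L$ into data about an extension of $\cI_{Z/X}$.

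Next I would set up the extension group that classifies candidate bundles. A rank two bundle $\cF$ with section $s$ whose zero locus is $Z$ and with $\det(\cF)\cong\mathcal L$ is exactly the middle term of a sequence
\begin{equation*}
0\longrightarrow \cO_X\longrightarrow \cF\longrightarrow \cI_{Z/X}\otimes\mathcal L\longrightarrow 0,
\end{equation*}
so such $\cF$ correspond to classes in $\mathrm{Ext}^1_{\cO_X}(\cI_{Z/X}\otimes\mathcal L,\cO_X)$. I would analyze this group through the local-to-global spectral sequence
\begin{equation*}
E_2^{p,q}=H^p\bigl(X,\sExt^q_{\cO_X}(\cI_{Z/X}\otimes\mathcal L,\cO_X)\bigr)\Longrightarrow \mathrm{Ext}^{p+q}_{\cO_X}(\cI_{Z/X}\otimes\mathcal L,\cO_X).
\end{equation*}
The sheaf $\sExt^1_{\cO_X}(\cI_{Z/X},\cO_X)$ is isomorphic to $\det(\cN_{Z/X})$ supported on $Z$ (again by the l.c.i.\ Koszul resolution), and after twisting by $\mathcal L^\vee$ the determinant hypothesis makes $\sExt^1_{\cO_X}(\cI_{Z/X}\otimes\mathcal L,\cO_X)\cong\cO_Z$. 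This gives the key global section of the Ext-sheaf; I would show that a class in $\mathrm{Ext}^1$ restricting to the nowhere-vanishing section $1\in H^0(\cO_Z)$ produces a locally free extension, since local freeness of $\cF$ is a pointwise condition that holds precisely when the extension class generates $\sExt^1$ at each point of $Z$. The hypothesis $H^2(\mathcal L^\vee)=0$ enters here to kill the obstruction $E_2^{2,0}=H^2(\sHom(\cI_{Z/X}\otimes\mathcal L,\cO_X))=H^2(\mathcal L^\vee)$, guaranteeing that the surjection $\mathrm{Ext}^1\to H^0(\cO_Z)$ in the spectral sequence actually hits $1$.

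For the uniqueness clause I would observe that the fibre of the map $\mathrm{Ext}^1\to H^0(\sExt^1)=H^0(\cO_Z)$ over the distinguished section $1$ is a torsor under $E_2^{1,0}=H^1(\sHom(\cI_{Z/X}\otimes\mathcal L,\cO_X))=H^1(\mathcal L^\vee)$, using that $\sHom(\cI_{Z/X},\cO_X)\cong\cO_X$ because $Z$ has codimension $2$ so $\cI_{Z/X}$ agrees with $\cO_X$ in codimension one. Hence when $H^1(\mathcal L^\vee)=0$ the extension class yielding a given $(Z,s)$ is unique, and two bundles arising this way with the same zero locus are isomorphic; I would then note that an isomorphism of extensions carrying one section to the other gives the isomorphism of bundles asserted in the statement.

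I expect the main obstacle to be the careful bookkeeping of the two canonical identifications of the Ext-sheaves, namely $\sExt^1_{\cO_X}(\cI_{Z/X},\cO_X)\cong\det(\cN_{Z/X})$ and the vanishing of the lower $\sExt^q$ for $q\geq 2$, which rely on the local complete intersection property of $Z$ and the associated Koszul resolution. Translating the global hypothesis $\det(\cN_{Z/X})\cong\cO_Z\otimes\mathcal L$ into the statement that the relevant Ext-sheaf is the \emph{trivial} line bundle on $Z$ (so that it admits the nowhere-vanishing section needed for local freeness) is the delicate point; everything else is a formal consequence of the spectral sequence together with the two cohomological vanishing hypotheses on $\mathcal L^\vee$.
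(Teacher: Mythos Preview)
The paper does not prove this theorem: its entire proof is the citation ``See \cite[Theorem 1.1]{Ar}.'' Your sketch is precisely the standard argument underlying that reference (the Hartshorne--Serre correspondence via the local-to-global $\mathrm{Ext}$ spectral sequence), so in substance you are reproducing what the paper outsources. Two small points worth tightening: first, the fundamental local isomorphism for a codimension-two l.c.i.\ gives $\sExt^2_{\cO_X}(\cO_Z,\cO_X)\cong\det(\cN_{Z/X})$ directly, without the extra $\omega_X^\vee$ factor you wrote (that twist is already absorbed in the adjunction $\omega_Z\cong\omega_X|_Z\otimes\det\cN_{Z/X}$). Second, for uniqueness, locally free extensions correspond to \emph{nowhere-vanishing} sections of $H^0(\cO_Z)$, not only to the section $1$; when $Z$ is connected these differ by a scalar absorbed by rescaling $s$, but for disconnected $Z$ you should either say a word about why the resulting bundles are still isomorphic or, as Arrondo does, phrase the bijection in terms of pairs $(\cF,s)$ rather than $\cF$ alone.
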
 
\begin{proof}
See \cite[Theorem 1.1]{Ar}.
\end{proof}

For further notation and all the other necessary results not explicitly mentioned in the paper, unless otherwise stated we tacitly refer to \cite{Ha2}.

\section{The proof of Theorem \ref{tFerrand}}

\label{sFerrand}

This section contains some preliminary results needed in the next sections. We begin with the following definitions.
\begin{definition}
\label{tr}
Let $X$ be a smooth variety and let $\cO_X(H)$ be an ample line bundle on $X$. 

The {\sl $H$--degree} of a curve $Z \subset X$ is $HZ$. An {\sl $H$--line} is a smooth irreducible rational curve $L \subset X$ such that $HL=1$. 
\end{definition}

\begin{definition}
\label{tr2}
Let $C$ be a reduced connected curve. 

We say that $C$ is a {\sl seminormal tree of smooth rational curves} if its irreducible components are smooth rational curves, the singularities are given by smooth branches with independent tangents (that is they are seminormal singularities) and the dual graph is a tree.
\end{definition}

Seminormal trees of smooth rational curves are characterized as the reduced connected curves with arithmetic genus $0$ in \cite[Proposition 1.8]{C}. Moreover, still by \cite[Proposition 1.8]{C}, they are nodal if and only if they are Gorenstein. Since they have independent tangents, when there are more than two curves meeting at a point, they are not nodal. On the other hand, when $C$ lies on a smooth threefold, there can be at most three curves meeting at a point. In the latter case, that is the one we consider in this paper, the dual graph is a tree with each vertex of degree at most $3$. 

\begin{lemma}
\label{lFerrand}
Let $X$ be a smooth Fano threefold with fundamental line bundle $\cO_X(h)$. 

Let $\cE$ be a rank two bundle on $X$ such that $c_1(\cE)=\varepsilon h$ and $h^1(\cE(-eh))=0$ and $\cE  \not \cong \cO_X \oplus \cO_X(\varepsilon h)$, for some $\varepsilon, e \in \bZ$ with $1 \le 1+\varepsilon \le e \le  i_X$. Assume that there is $s \in H^0(\cE)$ such that $Z=(s)_0 \subset X$ is a curve.

Then one of the following assertions hold:
\begin{enumerate}
\item $e=i_X-1$ and $Z_{red}$ is a union of disjoint $h$--lines; 
\item $e=i_X$ and each connected component of $Z_{red}$ is a seminormal tree of smooth rational curves with each vertex of degree at most $3$.
\end{enumerate}
\end{lemma}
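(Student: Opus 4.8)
The plan is to pass to the Serre sequence of $s$, turn the hypothesis $h^1(\cE(-eh))=0$ into a cohomological vanishing supported on $Z$, and then read the geometry of $Z_{red}$ off Riemann--Roch on its connected components. First, since $Z=(s)_0$ has pure dimension $1$, the divisorial part $Y$ in \eqref{sez} is empty, so $Z$ is locally complete intersection and \eqref{seqSerre} reads
\begin{equation*}
0\longrightarrow\cO_X\longrightarrow\cE\longrightarrow\cI_{Z/X}(\varepsilon h)\longrightarrow0;
\end{equation*}
moreover \eqref{omega1} and $\omega_X\cong\cO_X(-i_Xh)$ give $\omega_Z\cong\cO_Z\otimes\cO_X((\varepsilon-i_X)h)$, and in particular $Z$ is Gorenstein, so Serre duality holds on it. (The hypothesis $\cE\not\cong\cO_X\oplus\cO_X(\varepsilon h)$ is automatic here, since a section of that split bundle has locally constant first component and hence can never have a one--dimensional zero locus; it is not used below.)

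Next I would produce the key vanishing on $Z$. Twisting the displayed sequence by $\cO_X(-eh)$ and using $h^1(\cE(-eh))=0$, I get an injection $H^1(\cI_{Z/X}((\varepsilon-e)h))\hookrightarrow H^2(\cO_X(-eh))$. By Serre duality and Kodaira vanishing on $X$, $H^2(\cO_X(-eh))\cong H^1(\cO_X((e-i_X)h))^\vee=0$ because $\cO_X(eh)$ is ample ($e\ge1+\varepsilon\ge1$). Hence $H^1(\cI_{Z/X}((\varepsilon-e)h))=0$, and plugging this into \eqref{seqStandard} twisted by $\cO_X((\varepsilon-e)h)$, together with $H^0(\cO_X((\varepsilon-e)h))=0$ (a strictly negative twist, as $e>\varepsilon$), forces $H^0(\cO_Z((\varepsilon-e)h))=0$. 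Setting $j:=i_X-e\ge0$ and applying Serre duality on $Z$ with the above expression for $\omega_Z$, this becomes
\begin{equation*}
H^1(\cO_Z\otimes\cO_X(-jh))=0.
\end{equation*}

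I would then transfer this to $Z_{red}$ and conclude. Tensoring the nilradical sequence $0\to\cN\to\cO_Z\to\cO_{Z_{red}}\to0$ by $\cO_X(-jh)$ and using that $H^2$ of any coherent sheaf on the one--dimensional $Z$ vanishes, the map $H^1(\cO_Z\otimes\cO_X(-jh))\to H^1(\cO_{Z_{red}}\otimes\cO_X(-jh))$ is onto, so $H^1(\cO_{Z_{red}}\otimes\cO_X(-jh))=0$. Fix a connected component $C$ of $Z_{red}$, whence $H^1(\cO_C\otimes\cO_X(-jh))=0$. If $j=0$ this reads $p_a(C)=h^1(\cO_C)=0$, so $C$ is a seminormal tree of smooth rational curves by \cite[Proposition 1.8]{C}: this is case (2), with $e=i_X$. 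If $j\ge1$ then $\cO_C\otimes\cO_X(-jh)$ has negative degree on every component of $C$, so also $H^0(\cO_C\otimes\cO_X(-jh))=0$; Riemann--Roch on the reduced connected curve $C$ gives $0=\chi(\cO_C\otimes\cO_X(-jh))=-j(hC)+1-p_a(C)$, i.e.
\begin{equation*}
j(hC)+p_a(C)=1.
\end{equation*}
As $hC\ge1$ and $p_a(C)\ge0$, this forces $j=1$, $hC=1$ and $p_a(C)=0$, so $C$ is an irreducible smooth rational curve of $h$--degree $1$, i.e. an $h$--line, with $e=i_X-1$; distinct components being disjoint, this is case (1). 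In particular the dichotomy $e\in\{i_X-1,i_X\}$ is itself part of the conclusion.

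The step I expect to be the crux is the passage from $Z$ to $Z_{red}$: because $Z$ may be everywhere nonreduced, arithmetic genus is not monotone under $Z_{red}\subseteq Z$, so one cannot simply bound $p_a(Z_{red})$ by $p_a(Z)$. What saves the argument is that the comparison map on $H^1$ is surjective for the cheap reason that $H^2$ vanishes on a curve; the other point demanding care is the legitimacy of Serre duality on the Gorenstein, but possibly nonreduced, curve $Z$, which is exactly why recording that $Z$ is locally complete intersection at the outset matters.
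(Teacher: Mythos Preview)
Your proof is correct and follows essentially the same route as the paper: extract the Koszul sequence of $s$, convert $h^1(\cE(-eh))=0$ into $h^0(\cO_Z\otimes\cO_X((\varepsilon-e)h))=0$ via the vanishing of $H^i(\cO_X(-eh))$, dualize on the Gorenstein curve $Z$ to get $h^1(\cO_Z\otimes\cO_X((e-i_X)h))=0$, push this down to the reduced connected components, and finish with Riemann--Roch. The only cosmetic differences are that the paper reduces to a connected component of $Z$ before reducing the scheme structure (you do it in the opposite order, which is harmless), and the paper spends a line disposing of $Z=\emptyset$, which as you note is already excluded by the hypothesis that $(s)_0$ is a curve.
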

\begin{proof}
Since $Z$ is a curve, it follows that the sequence \eqref{seqSerre} corresponding to $Z$ becomes
\begin{equation}
\label{seqFerrand}
0 \longrightarrow\cO_{X} \longrightarrow \cE \longrightarrow \cI_{Z/X}(\varepsilon h) \longrightarrow 0,
\end{equation}
where either $Z=\emptyset$ or $Z\subset X$ is a locally complete intersection 
curve and \eqref{omega1} gives
\begin{equation}
\label{omega}
\omega_Z\cong\cO_Z\otimes\cO_X((\varepsilon-i_X) h).
\end{equation}

If $Z=\emptyset$, then \eqref{seqFerrand} splits,  because $h^1(\cO_X(-\varepsilon h))=0$ for each Fano threefold  by Kodaira vanishing, since $\varepsilon \ge 0$. Hence $\cE\cong\cO_X\oplus\cO_X(\varepsilon h)$, contradicting our hypothesis.

It remains to handle the case $Z\ne\emptyset$: we will deal with it in what follows. Since $e \ge 1$ and $\varepsilon-e\le -1$, we have that $h^i(\cO_X(-eh))=h^i(\cO_X((\varepsilon-e)h))=0$ for $i\le 2$. Hence the cohomologies of \eqref{seqStandard} tensored by $\cO_X((\varepsilon-e) h)$ and \eqref{seqFerrand} tensored by $\cO_X(-e h)$ imply
\begin{equation}
\label{instanton}
h^0(\cO_Z\otimes\cO_X((\varepsilon-e)h))=h^1(\cI_{Z/X}((\varepsilon-e)h))=h^1(\cE(-eh))=0.
\end{equation}
Each connected component of $Z$ still satisfies the same properties of the whole $Z$, namely it is a locally complete intersection curve and the analogue of \eqref{omega} holds for it. We restrict to a fixed connected component $\overline{Z}$ of $Z$ from now on. 

Now \eqref{omega} and \eqref{instanton} give
$$h^1(\cO_Z\otimes\cO_X((e-i_X)h)=h^0(\omega_Z\otimes\cO_X((-e+i_X)h)=h^0(\cO_Z\otimes\cO_X((\varepsilon-e)h))=0.$$
Then the exact sequence
$$
0 \longrightarrow \cI_{\overline{Z}/Z}\otimes\cO_X((e-i_X)h) \longrightarrow \cO_Z\otimes\cO_X((e-i_X)h)\longrightarrow \cO_{\overline{Z}}\otimes\cO_X((e-i_X)h)\longrightarrow 0
$$
and the fact that $H^2(\cI_{\overline{Z}/Z}\otimes\cO_X((e-i_X)h))=0$ since $Z$ is a curve, imply that
\begin{equation}
\label{h^1}
h^1(\cO_{\overline{Z}}\otimes\cO_X((e-i_X)h))=0.
\end{equation}
If $\overline{Z}_{red}$ is the reduced scheme structure on $\overline{Z}$, then
\begin{equation}
\label{Zprime}
h^0(\cO_{\overline{Z}_{red}})=1,\qquad p_a(\overline{Z}_{red}) \ge 0.
\end{equation}
Let $\overline{Z}_{red}^{(1)},\dots,\overline{Z}_{red}^{(r)}$ be the irreducible components of $\overline{Z}_{red}$. Then, for any $1 \le i \le r$ and $t \ge 1$, we have that $\cO_{\overline{Z}_{red}^{(i)}}\otimes\cO_X(th)$ is ample, hence $H^0(\cO_{\overline{Z}_{red}^{(i)}}\otimes\cO_X(-th))=0$ (see for example \cite[Proposition 1]{M}). Hence the inclusion $\cO_{\overline{Z}_{red}}\subseteq\bigoplus_{i=1}^r\cO_{\overline{Z}_{red}^{(i)}}$ yields
\begin{equation}
\label{h^0}
H^0(\cO_{\overline{Z}_{red}}\otimes\cO_X(th))=0 \ \hbox{for} \ t\le -1. 
\end{equation}
Moreover the exact sequence
$$
0\longrightarrow \cI_{\overline{Z}_{red}/\overline{Z}}\otimes\cO_X((e-i_X)h) \longrightarrow \cO_{\overline{Z}}\otimes\cO_X((e-i_X)h)\longrightarrow \cO_{\overline{Z}_{red}}\otimes\cO_X((e-i_X)h)\longrightarrow 0
$$
and \eqref{h^1} show that $h^1(\cO_{\overline{Z}_{red}}\otimes\cO_X((e-i_X)h))=0$. Hence, applying the Riemann--Roch theorem \cite[Theorem 7.3.26]{L} on $\overline{Z}_{red}$, we get
\begin{equation}
\label{Genus}
\begin{aligned}
h^0(\cO_{\overline{Z}_{red}}\otimes\cO_X((e-i_X)h))&=\chi(\cO_{\overline{Z}_{red}}\otimes\cO_X((e-i_X)h))=\\
&=(e-i_X)\overline{Z}_{red}h+1-p_a(\overline{Z}_{red}).
\end{aligned}
\end{equation}

If $e=i_X$, then \eqref{Zprime} yields $p_a(\overline{Z}_{red})=0$,  hence $\overline{Z}_{red}$ is a seminormal 
tree of smooth rational curves with each vertex of degree at most $3$ by \cite[Proposition 1.8]{C}. This gives (2). 

If $e \le i_X-1$, then \eqref{Zprime}, \eqref{h^0} and \eqref{Genus} imply
$$
0\le p_a(\overline{Z}_{red})=(e-i_X)\overline{Z}_{red}h+1\le0,
$$
hence $p_a(\overline{Z}_{red})=0, i_X=e+1$ and $\overline{Z}_{red}$ is an $h$--line. 
This gives (1).
\end{proof}

\begin{remark}
The cases listed in Lemma \ref{lFerrand} are not mutually exclusive, because we have not assumed anywhere that $e$ is the minimal integer such that $h^1(\cE(-eh))=0$. 
\end{remark}

We are now ready to prove Theorem \ref{tFerrand} stated in the Introduction. 
\medbreak
\noindent{\it Proof of Theorem \ref{tFerrand}.}

Recall that we are assuming $\cE\not\cong\cO_X^{\oplus2}$. We first show that $\cE$ is indecomposable. In fact, assume to the contrary that $\cE$ is decomposable. Since $c_1(\cE)=0$ and $h^0(\cE) \ne 0$, there is a line bundle $\mathcal L$ on $X$ such that $\cE \cong \mathcal L \oplus \mathcal L^{-1}$ and $h^0(\mathcal L) \ne 0$. Now, if $D \in \vert \mathcal L \vert$, we get by the $\mu$--semistability of $\cE$ that $Dh^2 = \mu(\cO_X(D)) \le \mu(\cE) = 0$. But $D$ is effective and $h$ is ample, hence $D$ must be trivial and therefore $\cE \cong \cO_X^{\oplus2}$, a contradiction. This proves that $\cE$ is indecomposable.

Let $s\ne0$ be a section in $H^0(\cE)$ and write $(s)_0=Y\cup Z$ as in \eqref{sez}. If $Y \ne \emptyset$, then \eqref{seqSerre} gives an inclusion $\cO_X(Y)\subset \cE$ and $\mu(\cO_X(Y))=Yh^2>0=\mu(\cE)$, which contradicts the $\mu$--semistability of $\cE$.

We deduce that $Z=(s)_0$ and either $Z = \emptyset$ or $Z \subset X$ is a locally complete intersection curve. Hence the sequence \eqref{seqSerre} associated to $s$ becomes
\begin{equation}
\label{seqBundle}
0\longrightarrow\cO_X\longrightarrow\cE\longrightarrow\cI_{Z/X}\longrightarrow0.
\end{equation}
If $Z = \emptyset$, we have that $\cI_{Z/X} \cong \cO_X$ and the sequence \eqref{seqBundle} splits, since $h^1(\cO_X)=0$. But then $\cE \cong\cO_X^{\oplus2}$, a contradiction. Thus $Z$ is a locally complete intersection curve.

The cohomology of \eqref{seqBundle} yields $h^0(\cE)=1$, hence the zero locus of each non--zero section in $H^0(\cE)$ is $Z$. Moreover \eqref{omega1} gives
\begin{equation}
\label{BB}
\omega_Z\cong\cO_Z\otimes\cO_X(-i_Xh).
\end{equation}
Let $i_X=1$. In this case $q_X^0=i_X$, hence the connected components of the reduced scheme structure on $Z$ are seminormal trees of smooth rational curves with each vertex of degree at most $3$ by assertion (2) of Lemma \ref{lFerrand}.

Let $i_X \ge 2$. If $i_X=4$ we have that $q^0_X=2$, hence Lemma \ref{lFerrand} with $(\varepsilon, e)=(0, 2)$ implies that $e \in \{3, 4\}$, a contradiction. Therefore $2 \le i_X \le 3$ and we have $q^0_X=i_X-1$, hence again Lemma \ref{lFerrand} implies that the reduced structure of $Z$ is a union of disjoint $h$--lines.

Moreover, if $i_X=3$, then \eqref{BB} induces a similar isomorphism for the dualizing sheaf of each connected component of $Z$. Thus each such component $\overline{Z}$ must be nonreduced, for otherwise, being a disjoint union of $h$--lines, we would have that $\omega_{\overline{Z}}\cong\cO_{\overline{Z}}\otimes\cO_X(-2h)$, a contradiction.

As we already pointed out in the proof of Lemma \ref{lFerrand}, each connected component $\overline{Z}\subset Z$ satisfies the same properties of $Z$. In particular, it is a local complete intersection inside $X$. Moreover, its normal bundle is the restriction of the normal bundle of $Z$, whence $\det(\cN_{\overline{Z}/X}) \cong\cO_{\overline{Z}}$.

Thus Theorem \ref{tSerre} yields the existence of a rank two bundle $\overline{\cE}$ with $c_1(\overline{\cE})=0$ fitting into an exact sequence of the form 
$$
0\longrightarrow\cO_{X}\longrightarrow\overline{\cE}\longrightarrow \cI_{\overline{Z}/X} \longrightarrow 0. 
$$
Then \eqref{RRgeneral} yields 
$$
\chi(\overline{\cE})=2-\frac{i_X}2c_2(\overline{\cE})h,
$$
hence, if $i_X$ is odd, $\overline{Z}h=c_2(\overline{\cE})h$ is necessarily even. 

To finish the proof of the theorem, assume that $i_X=2$ and let $\overline Z$ be a connected component of $Z$. As in the proof of Lemma \ref{lFerrand}, we have that 
$$\omega_Z\cong\cO_Z\otimes\cO_X(-2h) \ \hbox{and} \ H^1(\cO_{\overline{Z}}\otimes\cO_X(-h))=0$$
hence
\begin{equation}
\label{chi}
H^0(\cO_{\overline{Z}}\otimes\cO_X(-h))=H^1(\cO_{\overline{Z}}\otimes\cO_X(-h))=0.
\end{equation}
Consider the $h$--line $L=\overline Z_{red}$ and let $L^{(1)}$ be the first infinitesimal neighborhood of $L$ in $X$. According to \cite[\S 3]{bf}, we have the corresponding CM filtration
$$L=Z_0 \subset Z_1 \subset \ldots \subset Z_k=\overline Z$$
where, for each $1 \le j \le k$, $\cE_j := {\cI_{Z_{j-1}/Z_j}}$ is a locally free sheaf of rank $r_j$ supported on $L$ and either:
\begin{itemize}
\item [(a)] ${\rm rank} (\cE_1)=0$ and $\overline Z=L$ is reduced, or
\item [(b)]  ${\rm rank} (\cE_1)=1$ (equivalently, $L^{(1)} \not\subset \overline Z$) and the extension is called quasi-primitive, or 
\item [(c)] ${\rm rank} (\cE_1)=2$ (equivalently, $L^{(1)} \subset \overline Z$) and the extension is called thick.
\end{itemize}
In case (b), we have $\cE_1 = \cO_{\p1}(\alpha)$ and we claim that $\alpha \ge 0$. Indeed, \eqref{chi} gives that $H^1(\cO_{Z_k}\otimes\cO_X(-h))=0$ and the exact sequences
$$0 \to \cE_j(-h) \to \cO_{Z_j}(-h) \to \cO_{Z_{j-1}}(-h) \to 0$$
show inductively that $H^1(\cO_{Z_1}\otimes\cO_X(-h))=0$. Then, the exact sequence
$$0 \to \cO_{\p1}(\alpha-1) \to \cO_{Z_1}(-h) \to \cO_L(-h) \to 0$$
implies that $H^1(\cO_{\p1}(\alpha-1))=0$, hence $\alpha \ge 0$. Since $\chi(\cO_{\overline{Z}}\otimes\cO_X(-h))=0$ by \eqref{chi}, it follows, precisely as in the proof of \cite[Theorem 1.4]{befr}, that the extension is primitive. Then, exactly as in the proof of \cite[Proposition 4.6]{ammpl}, we deduce that $\overline Z \cong \bP^1 \times {\rm Spec}(\mathbb C[x]/x^{k+1})$.

In case (c), precisely as in the proof of \cite[\S 4.2]{ammpl} but using now \eqref{chi}, we get that $\cE_j \cong \cO_L^{\oplus r_j}$ and therefore the multiple structure is defined by extensions
$$0 \to \cO_L^{\oplus r_j} \to \cO_{Z_j} \to \cO_{Z_{j-1}} \to 0.$$ 
This completes the proof of the theorem.
\qed
\medbreak

\begin{remark}
If $i_X=1$ and $\overline{Z}=\overline{Z}_{red}$, then $\overline{Z}h=2$. Indeed $\omega_{\overline{Z}}\cong\cO_{\overline{Z}}\otimes\cO_X(-h)$ by \eqref{BB}, hence
$$
-\overline{Z}h+1=\chi(\omega_{\overline{Z}})=-h^1(\omega_{\overline{Z}})=-h^0(\cO_{\overline{Z}})=-1,
$$
thanks to the Riemann--Roch theorem on $\overline{Z}$, the connectedness of $\overline{Z}$ and $p_a(\overline{Z})=0$.
\end{remark}

\begin{remark}
\label{quadrica}
Consider the case $i_X=3$ of Theorem \ref{tFerrand}, that is when $X$ is a smooth quadric threefold. We still have, as in the case $i_X=2$, that if $\overline Z$ is a connected component of $Z$, then $L=\overline Z_{red}$ is an $h$--line. We claim that the extension $L \subset \overline Z$ is not thick. In fact, notice first that $\cI_{{L^{(1)}/X}} = \cI_{{L/X}}^2$ and $\cN_{L/X} \cong \cO_{\p1} \oplus \cO_{\p1}(1)$, hence the exact sequence
$$0 \to \cI_{{L/X}}^2 \otimes\cO_X(-h) \to \cI_{{L/X}} \otimes\cO_X(-h) \to \cO_{\p1}(-1) \oplus \cO_{\p1}(-2) \to 0$$
gives $h^2(\cI_{{L/X}}^2 \otimes\cO_X(-h))=h^1(\cO_{\p1}(-1) \oplus \cO_{\p1}(-2))=1$. Now, assume that $L^{(1)} \subset \overline Z$. As in the proof of Lemma \ref{lFerrand}, we have that $H^1(\cO_{\overline{Z}}\otimes\cO_X(-h))=0$ and the exact sequence
$$0 \to \cI_{{L^{(1)}/\overline{Z}}}\otimes\cO_X(-h) \to \cO_{\overline{Z}}\otimes\cO_X(-h) \to \cO_{L^{(1)}}\otimes\cO_X(-h) \to 0$$
shows that $H^1(\cO_{L^{(1)}}\otimes\cO_X(-h))=0$. Then the exact sequence
$$0 \to \cI_{{L/X}}^2 \otimes\cO_X(-h) \to \cO_X(-h) \to \cO_{L^{(1)}}\otimes\cO_X(-h) \to 0$$
implies the contradiction $H^2(\cI_{{L/X}}^2 \otimes\cO_X(-h))=0$.
\end{remark}

\begin{remark}
\label{rGieseker}
The same argument used in \cite[p.~89]{O--S--S} proves that if $X$ is a smooth variety endowed with an ample line bundle $\cO_X(h)$ which generates $\Pic(X)$, $n\ge2$ and  $h^1(\cO_X)=0$, then every rank $2$ bundle $\cE$ which is semistable with respect to $\cO_X(h)$ is either $\mu$--stable or isomorphic to $\cO_X^{\oplus2}$.
\end{remark}

\begin{remark}
Recall that a {\sl Del Pezzo variety} is a pair $(X,\cO_X(H))$ where $X$ is a smooth variety of dimension $n\ge1$ and $\cO_X(H)$ is an ample line bundle such that $\omega_X^{-1}\cong\cO_X((n-1)H)$ (see \cite[Section 3.2]{I--P}). 

If $X$ is a Fano threefold with $i_X=2$ with fundamental line bundle $\cO_X(h)$, then $(X,\cO_X(h))$ is Del Pezzo. Conversely, if $(X,\cO_X(H))$ is a Del Pezzo threefold, then either $X$ is a Fano threefold with $i_X=2$ and $\cO_X(H)$ is its fundamental line bundle or $X\cong\p3$ and $\cO_X(H)\cong\cO_{\p3}(2)$ (see \cite[Section 3.3]{I--P}).

On the one hand, a vector bundle $\cE$ on $X\cong\p3$ is $\mu$--(semi)stable with respect to $\cO_{\p3}(1)$ if and only if it is $\mu$--(semi)stable with respect to $\cO_X(H)\cong\cO_{\p3}(2)$. Thus, the only strictly $\mu$--semistable rank two bundle $\cE$ with $c_1(\cE)=0$ on $X\cong \p3$ such that $h^1(\cE(-H))=h^1(\cE(-2))=0$ is $\cO_X^{\oplus2}$. On the other hand, there are no $H$--lines on $X$. We deduce that Theorem \ref{tFerrand} holds also in the case $X\cong\p3$ and $\cO_X(H)\cong\cO_{\p3}(2)$.
\end{remark}

\begin{remark}
The hypothesis $h^0(\cE) \ne 0$ is crucial in the proof of Theorem \ref{tFerrand} (and in Lemma \ref{lFerrand}). Indeed without this restriction, the description of strictly $\mu$--semistable rank two bundles $\cE$ with $c_1(\cE)=0$ and $h^1(\cE(-q^0_Xh))=0$ on a Fano variety $X$ with $\varrho_X\ge2$ is certainly much richer than the one in Theorem \ref{tFerrand}. 

As an example, consider the general hyperplane section $X \subset \p7$ of the Segre product $\p2\times\p2$ in $\p8$. The projections from $\p2\times\p2$ onto the two factors induce maps $p_i\colon X\to \p2$. It is well known that $\Pic(X)$ is generated by $\cO_X(h_i):=p_i^*\cO_{\p2}(1)$ for $i=1,2$ and $\cO_X(h)\cong\cO_X(h_1+h_2)$. For each pair of positive integers $a_1,a_2$ such that $\vert a_1-a_2\vert \le 1$, the extension
$$
0\longrightarrow\cO_X(a_1h_1-a_2h_2)\longrightarrow\cE\longrightarrow\cO_X(-a_1h_1+a_2h_2)\longrightarrow0
$$
gives a strictly $\mu$--semistable rank two bundle $\cE$ such that $c_1(\cE)=0$ and, thanks to \cite[Proposition 2.5]{C--F--M3}, $h^0(\cE)=h^1(\cE(-h))=0$; see also \cite[Proposition 3.5]{M--M--PL} for further details about this example.
\end{remark}

\begin{remark}
According to the definition in the introduction, the bundles $\cE$ considered in Theorem \ref{tFerrand} are not instanton bundles, because when $\varepsilon = 0$, the definition requires that $h^0(\cE)=0$. On the other hand, if we take the more general (and recent) definition of instanton given in \cite[Definition 1.3]{An--Cs}, then $\cE$ is instanton only when $\delta=1$ and $i_X=3$.
\end{remark}

\section{Examples}
\label{sExample}
In the examples below, we prove the existence of a rank two bundle $\cE$ satisfying the hypotheses of Lemma \ref{lFerrand} (hence also of Theorem \ref{tFerrand}) for each admissible pair $(e, \varepsilon)$, that is $e \in \{i_X, i_X-1\}$ and $0 \le  \varepsilon \le e-1$, and such that

\begin{equation}
\label{NonVanishing}
h^1(\cE(-(e-1)h)) \ne 0.
\end{equation}
When $\Pic(X)$ is generated by the fundamental line bundle $\cO_X(h)$, it follows by Lemma \ref{lHoppe} and \eqref{seqFerrand}, that such bundles are strictly $\mu$--semistable when $\varepsilon=0$ and $\mu$--stable when $\varepsilon=1$. There are also $\mu$--semistable (some strictly, some stable) examples for higher Picard number, see Example \ref{high} below. 

To this purpose, we will use Theorem \ref{tSerre}, starting from curves as in Theorem \ref{tFerrand}. In particular, in order to show the sharpness of the assertions (1) and (2) of the aforementioned theorem, we must construct examples starting from curves whose reduced structure is necessarily a disjoint union of $h$--lines. The existence of such subschemes is obvious when $i_X \ge 3$ and we refer the interested reader to \cite[Sections 3.4, 3.5, 4.2, 4.4 and 4.5]{I--P} for the case $i_X \le 2$. 

The starting curves will be disjoint unions of $h$--lines and/or $h$--conics in Examples \ref{eP3}, \ref{eQuartic} and \ref{eCubic}, while, in Example \ref{prime}, we will use a smooth rational curve on a BN--general K3 surface.  

\begin{example}[The cases $(e, \varepsilon) \in \{(i_X-1, i_X-2), (i_X, i_X-1)\}$] \hskip 3cm
\label{eP3}

This example is a generalization of the well--known construction in $\p3$ described in \cite[Examples 3.1.1 and 3.1.2]{Ha4}.

Let $Z_r \subset X$ be the union of $k\ge2$ pairwise disjoint smooth irreducible rational curves of $h$--degree $r$ where $\max\{1, 3-i_X\} \le r \le 2$. By construction we have $\omega_{Z_r}\cong\cO_{Z_r}\otimes\cO_{X}((r-3)h)$, hence 
$$\det(\cN_{{Z_r}/X}) \cong \omega_{Z_r} \otimes \cO_{X}(i_Xh) \cong \cO_{Z_r} \otimes \cO_{X}((r+i_X-3)h).$$
Thus Theorem \ref{tSerre} yields the existence of a rank two bundle $\cE$ with $c_1(\cE)=\varepsilon h$ fitting into \eqref{seqFerrand}, where $\varepsilon=r+i_X-3 \in \{i_X-2, i_X-1\}$.

Trivially $h^0(\cE)\ne0$. The cohomology of \eqref{seqFerrand} tensored by $\cO_X(-h)$ returns $h^0(\cE(-h))= h^0(\cI_{Z/X}((r+i_X-4)h))$. If either $i_X \le 2$ or $i_X=3$ and $r=1$, then the latter dimension is trivially $0$ because $Z_r\ne\emptyset$. If $i_X=3$ and $r=2$, then $r+i_X-4=1$ and $X\subset\p4$ is a smooth quadric hypersurface. It is easy to check that there are no hyperplanes in $\p4$ containing two disjoint conics in a smooth quadric. Similarly if $i_X=4$, then $r+i_X-4= r $ and $X \cong \p3$. Again there are no hypersurfaces of degree $r$ in $\p3$ containing two disjoint integral subschemes of degree $r$ when $r=1$ or $r=2$ and $k \ge 3$. When $(i_X, r, k) \ne (4, 2, 2)$ we conclude that $h^0(\cI_{Z/X}((r+i_X-4)h))=0$ again, hence $h^0(\cE(-h))=0$: in particular, if $\varrho_X=1$ and $\varepsilon=0$, then $\cE$ is $\mu$--semistable by Lemma \ref{lHoppe}. 

Let $e:=1+\varepsilon=r+i_X-2$. The cohomology of \eqref{seqFerrand} tensored by $\cO_X(-eh)$ returns $h^1(\cE(-eh))= h^1(\cI_{Z/X}(-h))$. The cohomology of \eqref{seqStandard} tensored by $\cO_X(-h)$ and the definition of $Z_r$ imply that the latter dimension is zero. The same argument also yields $h^1(\cE(-(e-1)h))=h^1(\cI_{Z/X})=k-1\ge1$.

Thus, when $(i_X, r, k) \ne (4, 2, 2)$, $\cE$ is a rank two bundle satisfying \eqref{NonVanishing} and the hypotheses of Lemma \ref{lFerrand} with $(e, \varepsilon) \in \{(i_X-1, i_X-2),(i_X, i_X-1)\}$ and $c_2(\cE)h=rk$.
\end{example}

\begin{example}[The cases $(e, \varepsilon) \in \{(i_X, i_X-2), (i_X, i_X-3), (i_X, i_X-4)\}$] \hskip 3cm
\label{eQuartic}
\indent If $i_X=1$, then there are no further cases besides the ones handled in Example \ref{eP3}. Thus we will assume $i_X \ge 2$ in what follows. In the fist part of this example, we assume that $X$ is not a Del Pezzo threefold of degree $1$, hence, as is well--known, either $X$ is not a Del Pezzo threefold of degree $2$ and $\cO_X(h)$ induces an aCM embedding $X \subset \p{N}$, or $\varphi_h : X \to \p3$ is a double cover ramified along a quartic surface.

Consider smooth pairwise disjoint $h$--conics $C_1,\dots, C_k\subset X$. 

We claim that there are smooth K3 surfaces $Y_i \in |-K_X|$ containing $C_i$, for all $1 \le i \le k$. The claim is clear in the case of the Del Pezzo threefold of degree $2$, since one can take as $C_i$ the inverse image under $\varphi_h$ of a general line in $\p3$ and as $Y_i$ the inverse image of a general quadric containing the line. In all the other cases we proceed as follows. It is easy to check that $\cI_{C_i/\p{N}}(i_X)$ is $0$--regular, hence the cohomology of the exact sequence
$$
0 \longrightarrow \cI_{X/\p {N}} \longrightarrow\cI_{C_i/\p {N}} \longrightarrow\cI_{C_i/ X} \longrightarrow 0,
$$
suitably tensored implies that the same holds for $\cI_{C_i/X}(i_Xh)$. Since $C_i$ is smooth and $i_X \ge 2$, it follows the existence of a smooth surface $Y_i\in\vert i_Xh\vert$ through $C_i$. 

Thus we know that $C_i$ is contained in a $K3$ surface in $Y_i\in\vert -K_X\vert$ without restrictions on $i_X \ge 2$. We set $\cO_{Y_i}(h_{Y_i}):=\cO_{Y_i}\otimes\cO_{X}(h)$, hence the degree of $Y_i$ is $h_{Y_i}^2=i_Xh^3$ and $h_{Y_i}C_i =2$. 

Recall that $\omega_{Y_i}\cong\cO_{Y_i}$, hence the adjunction formula on $Y_i$ implies $C_i^2=-2$.  Let $2 \le r \le i_X$ and consider the divisor ${Z_{i,r}}:=rC_i\subset Y_i$. The sequence \eqref{seqStandard} for the pair ${Z_{i,r}}\subset Y_i$ and the Riemann--Roch theorem on $Y_i$ imply that $\chi(\cO_{Z_{i,r}})=r^2$, whence $p_a({Z_{i,r}})=1-r^2\le -3$.

Since $(h_{Y_i}+C_i)C_i=0$ and $(h_{Y_i}+C_i)^2=h_{Y_i}^2+2>0$, it follows that $h_{Y_i}+C_i$ is big and nef, hence, for every $t \ge 1$ we have
\begin{equation}
\label{pert}
h^1(\cO_{Y_i}(t(h_{Y_i}+C_i)))=0 
\end{equation}
by the Kawamata--Viehweg vanishing theorem.

Moreover, the Riemann--Roch theorem on $Y_i$ implies
$$
h^0\big(\cO_{Y_i}(h+C_i)\big)\ge\frac{h_{Y_i}^2}2+3>\frac{h_{Y_i}^2}2+2=h^0\big(\cO_{Y_i}(h)\big),
$$
hence $C_i$ is not in the fixed locus of $\vert h_{Y_i}+C_i\vert$ and the general element in $\vert h_{Y_i}+C_i\vert$ does not intersect ${Z_{i,r}}$. Thus the same is true for $\vert r(h_{Y_i}+C_i)\vert$ and the adjunction formula on $Y_i$ gives 
$$
\omega_{Z_{i,r}} \cong \det(\cN_{Z_{i,r}/Y_i}) \cong \cO_{Z_{i,r}} \otimes \cO_{Y_i}({Z_{i,r}}) \cong \cO_{Z_{i,r}} \otimes \cO_{Y_i}(-rh_{Y_i}) \cong \cO_{Z_{i,r}} \otimes \cO_{X}(-rh).
$$
If $Z_r:=\bigcup_{i=1}^kZ_{i,r}$, then the adjunction formula on $X$ returns
$$
\det(\cN_{{Z_r}/X})\cong\cO_{Z_r}\otimes\cO_{X}((i_X-r)h)
$$
because the subschemes $Z_{i,r}$ are pairwise disjoint. Theorem \ref{tSerre} yields the existence of a rank two bundle $\cE$ with $c_1(\cE)=\varepsilon h$  fitting into \eqref{seqFerrand} where $\varepsilon=i_X-r$. 

As usual $h^0(\cE)\ne0$. We have $h^0(\cE(-h))=h^0(\cI_{{Z_r}/X}((i_X-r-1)h))$ and we claim that the latter dimension is zero. Indeed, if $i_X=4$ and $r=2$, then $i_X-r-1=1$. Since $p_a({Z_{i,r}})\le -3$, it follows that ${Z_{i,r}}$ is not contained in any plane inside $X\cong\p3$, hence the same is true for $Z_r$. If either $i_X \ne 4$ or $r\ne2$, then $i_X-r-1 \le 0$. We conclude also in this case that $h^0(\cI_{{Z_r}/X}((i_X-r-1)h))=0$ because $Z_r\ne\emptyset$. Thus the claim is proved, hence $h^0(\cE(-h))=0$: in particular, if $\varrho_X=1$ and $\varepsilon=0$, then $\cE$ is $\mu$--semistable by Lemma \ref{lHoppe}. 

We claim that $h^1(\cE(-i_Xh))=0$ and $h^1(\cE((1-i_X)h)) \ne 0$. To prove such a claim, we notice that the cohomology of 
\eqref{seqFerrand} tensored by $\cO_{X}(-th)$ yields
$$
h^1(\cE(-th))=h^1(\cI_{{Z_r}/X}((i_X-r-t)h))
$$
where $t\in\bZ$. If $t \ge i_X-1$, then $i_X-r-t \le -1$, hence the cohomologies of the sequence \eqref{seqStandard} for $X$ tensored by $\cO_{X}((i_X-r-t)h)$ and for $Y_i$ tensored by $\cO_{Y_i}((i_X-r-t)h_{Y_i})$ imply
\begin{align*}
h^1(\cI_{{Z_r}/X}((i_X-r-t)h))&=h^0(\cO_{Z_r}\otimes\cO_{X}((i_X-r-t)h))=\\
&=\sum_{i=1}^kh^0(\cO_{Z_{i,r}}\otimes\cO_{X}((i_X-r-t)h))=\\
&=\sum_{i=1}^kh^1(\cO_{Y_i}((i_X-r-t)h_{Y_i}-rC_i)).
\end{align*}

Taking $t=i_X$, the vanishing $h^1(\cO_{Y_i}(r(h_{Y_i}+C_i)))=0$ in \eqref{pert} and the Serre duality theorem on $Y_i$, yield $h^1(\cE(-i_Xh))=0$.

Now take $t=i_X-1$. On the one hand, the Riemann--Roch theorem on $Y_i$ yields
$$
\chi(\cO_{Y_i}((1-r)h_{Y_i}-rC_i))=\frac{(r-1)^2(h_{Y_i}^2+2)}2+1.
$$
On the other hand, $((r-1)h_{Y_i}+rC_i)C_i=-2$, hence $C_i$ is a fixed component of $\vert(r-1)h_{Y_i}+rC_i\vert$. Thanks to the Serre duality theorem, the Riemann--Roch theorem on $Y_i$ and \eqref{pert}, it follows that 
\begin{align*}
h^2(\cO_{Y_i}((1-r)h_{Y_i}-rC_i))&=h^0(\cO_{Y_i}((r-1)h_{Y_i}+rC_i))=\\
&=h^0(\cO_{Y_i}((r-1)(h_{Y_i}+C_i)))=\\
& = \chi(\cO_{Y_i}((r-1)(h_{Y_i}+C_i))=\frac{(r-1)^2(h_{Y_i}^2+2)}2+2.
\end{align*}
Thus
$$
h^1(\cO_{Y_i}((1-r)h_{Y_i}-rC_i)) = - \chi(\cO_{Y_i}((1-r)h_{Y_i}+rC_i)) + h^2(\cO_{Y_i}((1-r)h_{Y_i}-rC_i))=1,
$$
whence $h^1(\cE((1-i_X)h))=\sum_{i=1}^kh^1(\cO_{Y_i}((1-r)h_{Y_i}-rC_i)) = k$.

In particular $\cE$ is a rank two bundle satisfying \eqref{NonVanishing} and the hypotheses of Lemma \ref{lFerrand}  with $(e, \varepsilon) \in \{(i_X, i_X-4), (i_X, i_X-3), (i_X, i_X-2)\}$ and $c_2(\cE)h=2rk$.

Finally, consider $X$ a Del Pezzo $3$--fold of degree $1$. It follows by \cite[Proposition 3.2.4(i)]{I--P} that a general element $S \in |h|$ is a smooth Del Pezzo surface of degree $1$, that is the blow--up of the plane in $8$ points. Let $E_i, 1 \le i \le 8$ be the exceptional divisors, so that the $E_i$'s are pairwise disjoint $h$-lines on $X$.
If $Z=E_1 \cup \ldots \cup E_k$, for any $1 \le k \le 8$, we have that $\omega_Z \cong \cO_Z \otimes \cO_X(-2h)$ and therefore $\det(\cN_{Z/X}) \cong \cO_Z \otimes \cO_{X}$. Theorem \ref{tSerre} yields the existence of a rank two bundle $\cE$ with $c_1(\cE)=0$ fitting into \eqref{seqFerrand} and one can easily check that $H^1(\cE(-h))=H^1(\cE(-2h))=0$. Also, $\cE$ is $\mu$-semistable since, as is well known, $\Pic(X)$ is generated by $\cO_X(h)$.
\end{example}

\begin{example}[The cases $(e, \varepsilon) \in \{(i_X-1, i_X-3), (i_X-1, i_X-4)\}$] \hskip 3cm
\label{eCubic}

If $i_X \le 2$, then all the admissible cases are covered by Examples \ref{eP3} and \ref{eQuartic}. 
Thus $i_X \ge 3$ from now on, hence $\varrho_X=1$.

Consider smooth pairwise disjoint lines $L_1,\dots,L_k\subset X$. If $i_X=4$,  let $Y_{i}\subset \p3$ be a smooth cubic surface through $L_i$. If $i_X=3$, by combining the cohomologies of the twists of sequence
$$0  \longrightarrow \cO_{\p4}(-2) \longrightarrow \cI_{L_i/\p4} \longrightarrow \cI_{L_i/X} \longrightarrow 0$$
and of the Koszul complex resolving $\cI_{L_i/\p4}$, we deduce that $\cI_{L_i/X}(2h)$ is $0$--regular, hence there exists a smooth surface $Y_i\in\vert 2h\vert$ through $L_i$. 

By adjunction on $X$ we know that $Y_{i}$ is a Del Pezzo surface of degree $d=7-i_X$. In what follows we recall some facts about $Y_{i}$: we refer to \cite[Sections 24 and 25]{Man} for their proofs. The surface $Y_{i}$ is isomorphic to the blow up of $\p2$ at $9-d$ general points. As usual we denote by $\ell$ the pull--back of a general line in $\p2$ via the blowing up map and by $e_j$ the exceptional divisors for $1\le j\le 9-d$. Moreover we can always assume that $L_i=e_1$ for each $i$. If we set $\cO_{Y_{i}}(h_{Y_i}):=\cO_{Y_{i}}\otimes\cO_{X}(h)$, then $h_{Y_i}=3\ell-\sum_{j=1}^{9-d}e_j$. 

The subscheme ${Z_{i,r}}:=r L_i\subset Y_i\subset X$ with $2 \le r \le i_X-1$ is fixed and does not intersect the general element in $\vert3\ell-\sum_{j=2}^{9-d}e_j\vert$, hence
$$
\det(\cN_{Z_{i,r}/Y_i})\cong\cO_{Z_{i,r}}\otimes\cO_{Y_i}(rL_i)\cong\cO_{Z_{i,r}}\otimes\cO_{Y_i}(-rh_{Y_i}).
$$
Thus the adjunction formula on $Y_i$ yields
$$
\omega_{Z_{i,r}}\cong\cO_{Z_{i,r}}\otimes\cO_{Y_i}(-(r+1)h_{Y_i})\cong\cO_{Z_{i,r}}\otimes\cO_X(-(r+1)h).
$$
As in the previous example we set $Z_r:=\bigcup_{i=1}^kZ_{i,r}$ and notice that
$$
\det(\cN_{{Z_r}/X})\cong\cO_{Z_r}\otimes\cO_{X}((i_X-r-1)h)
$$
because the subschemes $Z_{i,r}$ are pairwise disjoint. Theorem \ref{tSerre} yields the existence of a rank two bundle $\cE$ with $c_1(\cE)=\varepsilon h$ fitting into \eqref{seqFerrand} and such that $\varepsilon=i_X-r-1 \in \{i_X-4, i_X-3\}$. 

Trivially $h^0(\cE)\ne0$. The cohomology of \eqref{seqFerrand} tensored by $\cO_{X}(-h)$ yields $h^0(\cE(-h))=h^0(\cI_{{Z_r}/X}((i_X-r-2)h))=0$, because $i_X-r-2\le 0$ and $Z_r\ne\emptyset$. Thus $h^0(\cE(-h))=0$, hence $\cE$ is $\mu$--semistable by Lemma \ref{lHoppe} if $\varepsilon=0$ because $\varrho_X=1$.

The cohomology of \eqref{seqFerrand} tensored by $\cO_{X}((1-i_X)h)$ yields 
\begin{equation}
\label{prima}
h^1(\cE((1-i_X)h))=h^1(\cI_{{Z_r}/X}(-rh)).
\end{equation}
The cohomology of the sequence \eqref{seqStandard} for $X$ tensored by $\cO_{X}(-rh)$ and for $Y_i$ tensored by $\cO_Y(-rh_{Y_i})$ imply
\begin{equation}
\label{seconda}
h^1(\cI_{{Z_r}/X}(-rh))=h^0(\cO_{Z_r}\otimes\cO_{X}(-rh))=\sum_{i=1}^kh^1(\cO_{Y_i}(-r(h_{Y_i}+L_i))).
\end{equation}
Since $(h_{Y_i}+L_i)L_i=0$ and $(h_{Y_i}+L_i)^2=1+i_X>0$, it follows that $h_{Y_i}+L_i$ is big and nef, hence $h^1(\cO_{Y_i}(-r(h_{Y_i}+L_i)))=0$ by the Kawamata--Viehweg vanishing theorem.
Therefore \eqref{prima} and \eqref{seconda} give 
$$
h^1(\cE((1-i_X)h))=\sum_{i=1}^kh^1(\cO_{Y_i}(-r(h_{Y_i}+L_i)))=0.
$$
We have $h^1(\cO_{Y_i}((r-2)h_{Y_i}+(r-1)L_i))=h^1(K_{Y_i}+(r-1)(h_{Y_i}+L_i))=0$ by the Kawamata--Viehweg vanishing theorem. Moreover the Serre duality theorem gives that $h^2(\cO_{Y_i}((r-2)h_{Y_i}+(r-1)L_i))=h^0(-(r-1)(h_{Y_i}+L_i))=0$. Hence
the exact sequence
$$0 \longrightarrow \cO_{Y_i}((r-2)h_{Y_i}+(r-1)L_i) \longrightarrow \cO_{Y_i}((r-2)h_{Y_i}+rL_i) \longrightarrow $$
$$\hskip 5cm \longrightarrow \cO_{L_i} \otimes \cO_{Y_i}((r-2)h_{Y_i}+rL_i) \longrightarrow 0$$
implies that $h^1(\cO_{Y_i}((1-r)h_{Y_i}-rL_i))=h^1(\cO_{L_i} \otimes \cO_{Y_i}((r-2)h_{Y_i}+rL_i))=h^1(\cO_{\p1}(-2))=1$.
Thus a similar computation as in \eqref{prima} and \eqref{seconda} yields
$$
h^1(\cE((2-i_X)h))=\sum_{I=1}^kh^1(\cO_{Y_i}((1-r)h_{Y_i}-rL_i))=k.
$$

In particular $\cE$ is a rank two bundle satisfying \eqref{NonVanishing} and the hypotheses of Lemma \ref{lFerrand}  with $(e, \varepsilon) \in \{(i_X-1, i_X-4), (i_X-1, i_X-3)\}$ and $c_2(\cE)h=rk$.  
\end{example}

We now give an example showing that rational curves of every even degree at least $4$ can occur in assertion (3) of Theorem \ref{tFerrand} when $\cO_X(h)$ is very ample.

\begin{example}[The case $(e, \varepsilon, i_X)=(1,0,1)$] \hskip 3cm
\label{prime}

If $X$ is a Fano threefold with $i_X=1$, then \eqref{RRgeneral} implies that $h^3$ is even: In fact, $\chi(\cO_X(h))=1+\frac{1}{2}h^3+\frac{1}{12}c_2(X)h$; on the other hand, $1=\chi(\cO_X)=-\frac{1}{24}K_Xc_2(X)=\frac{1}{24}c_2(X)h$, hence $c_2(X)h=24$ and $h^3$ is even. We call {\sl genus} of $X$ the number $g:=\frac{h^3}2+1$. The threefold $X$ is called {\sl prime} if $\Pic(X)$ is generated by $\cO_X(h)$ and {\sl of the principal series} if $\cO_X(h)$ is very ample. We have $3 \le g \le 12$, $g \ne 11$ for prime Fano threefolds of the principal series (see \cite{I}, \cite[Theorem 1.10]{Muk}, \cite[Theorems 6 and 7]{CLM}). 

A polarized K3 surface is a pair $(S,\mathcal L)$ where $S$ is a K3 surface and $\mathcal L$ is an ample line bundle on $S$. The polarized K3 surface $(S,\mathcal L)$ is called {\sl BN--general} if $h^0(\cA)h^0(\cB) < h^0(\mathcal L)$ for every non--trivial decomposition $\mathcal L\cong\cA\otimes\cB$. We will use below the fact, proved in \cite[(3.9) and Theorems 4.7, 5.5]{Muk}, that a BN--general polarized K3 surface $S \subset \p{g}$ can be realized as a hyperplane section of a prime Fano threefold $X \subset \p{g+1}$ of degree $2g-2$. 

Let $r \ge 1$ be an integer. We claim that for every $g$ as above, there are 
$$
C \subset S \subset X \subset \p{g+1}
$$ 
where $X$ is an anticanonically embedded prime Fano threefold of the principal series of genus $g$, $S$ is a smooth hyperplane section of $X$ (hence a K3 surface) and $C$ is a smooth irreducible rational curve of degree $2r$.

To this end observe that, by \cite[Theorem 1.1(iv)]{K1}, there is a smooth K3 surface $S \subset \p{g}$ containing a smooth irreducible rational curve $C$ of degree $2r$ and such that $\Pic(S)$ is freely generated by $\cO_X(H)$ and $\cO_X(C)$. Now \cite[Lemma 3.7(i)]{K2} implies that $(S,H)$ is a BN--general polarized K3 surface, hence, using the fact mentioned above, we can realize $S$ as a hyperplane section of a prime Fano threefold $X \subset \p{g+1}$. This proves the claim.
 
Now let $Z_r$ be the divisor $rC$ on $S$. We first show that 
\begin{equation}
\label{annu}
H^1(\cO_{Z_r}(-C))=0
\end{equation}
for each $r \ge 1$. In fact, since $C$ is connected and $H^1(\cO_S)=0$, we have that $H^1(\cO_S(-C))=0$. Now, from the exact sequence
\begin{equation}
\label{eq}
0 \longrightarrow \cO_S(-(r+1)C) \longrightarrow \cO_S(-C) \longrightarrow \cO_{Z_r}(-C) \longrightarrow 0
\end{equation}
the map
$$
H^2(\cO_S(-(r+1)C)) \longrightarrow H^2(\cO_S(-C))
$$ 
is dual to 
$$
H^0(\cO_S(C)) \longrightarrow H^0(\cO_S((r+1)C))
$$
and this is an isomorphism because $C$ is a base component of $\vert(r+1)C\vert$, since $C^2=-2$. Therefore the cohomology of \eqref{eq} gives \eqref{annu}. Next we claim that, for $r \ge 2$, we have
\begin{equation}
\label{banale}
\cO_{Z_r}\otimes \cO_S(h_S+rC) \cong \cO_{Z_r},
\end{equation}
where $\cO_S(h_S):=\cO_S\otimes\cO_X(h)$. To see this, first observe that we have an exact commutative diagram
$$
\xymatrix{& 0 \ar[d] & 0 \ar[d] & & \\ 0 \ar[r] & \cO_S(-rC) \ar[r] \ar[d] & \cO_S(-C) \ar[r] \ar[d] & \cO_{Z_{r-1}}(-C) \ar[r] & 0 &  \\ 0 \ar[r] & \cO_S \ar[r]^{\rm id}  \ar[d] & \cO_S \ar[r]  \ar[d] & 0 & \\ & \cO_{Z_r} \ar[r] \ar[d] & \cO_C \ar[r] \ar[d] & 0 & \\ & 0 & 0 & & }
$$
so that the snake lemma gives the exact sequence 
$$
0 \longrightarrow \cO_{Z_{r-1}}(-C) \longrightarrow \cO_{Z_r} \longrightarrow \cO_C \longrightarrow 0.
$$
Now, exactly as in \cite[Proof of Proposition 4.1]{BE}, the above gives rise to an exact sequence 
$$
0 \longrightarrow \cO_{Z_{r-1}}(-C) \longrightarrow \cO_{Z_r}^* \longrightarrow \cO_C^* \longrightarrow 1
$$
and applying \eqref{annu} we get that the restriction of line bundles gives an isomorphism
$$
\Pic(Z_r) = H^1(\cO_{Z_r}^*) \to H^1(\cO_C^*) = \Pic(C) \cong \bZ.
$$ 
Thus $\cO_{Z_r}\otimes \cO_S(h_{S}+rC) \cong \cO_{Z_r}$, because $(h_{S}+rC)C=0$, hence \eqref{banale} is proved.

Now the adjunction formula on $S$ and \eqref{banale} give
$$\omega_{Z_r} \cong \cO_{Z_r} \otimes \cO_S(rC) \cong \cO_{Z_r} \otimes \cO_S(-h_S) \cong \cO_{Z_r} \otimes \cO_X(-h).$$
Since $Z_r$ is locally complete intersection inside $X$, we deduce that 
$$\det(\cN_{Z_r/X}) \cong\cO_{Z_r}.$$ 
Also $H^1(\cO_X)=0$, hence Theorem \ref{tSerre} gives rise to a rank $2$ vector bundle $\cE$ on $X$ with $c_1(\cE)=0$, having a section $s$ with $Z_r = (s)_0$ and sitting in an exact sequence
\begin{equation}
\label{ult1}
0 \longrightarrow \cO_X \longrightarrow \cE \longrightarrow \cI_{Z_r/X} \longrightarrow 0.
\end{equation}
We have $H^i(\cO_X(-h))=0$ for $i = 0, 1$ and $H^0(\cI_{Z_r/X}(-h))=0$. Thus $h^0(\cE(-h))=0$, hence $\cE$ is strictly $\mu$--semistable by Lemma \ref{lHoppe}. Finally, to check $H^1(\cE(-h))=0$, using \eqref{ult1}, it remains to show that $H^1(\cI_{Z_r/X}(-h))=0$. To this end consider the exact sequence
\begin{equation}
\label{ult}
0 \longrightarrow \cO_X(-2h) \longrightarrow  \cI_{Z_r/X}(-h) \longrightarrow \cI_{Z_r/S}(-h) \longrightarrow 0.
\end{equation}
We have $(h_{S}+rC)C=0$ and $(h_{S}+rC)^2=h^3+2r^2>0$, hence $h_{S}+rC$ is big and nef and then
$$
H^1(\cI_{Z_r/S}(-h))=H^1(\cO_S(-h-rC))=0
$$
by the Kawamata--Viehweg vanishing theorem. Also $H^1(\cO_X(-2h))=0$, hence \eqref{ult} shows that 
$H^1(\cI_{Z_r/X}(-h))=0$. 
\end{example}

\begin{example}[Higher Picard number] \hskip 3cm
\label{high}

When $\rho_X \ge 2$, we have that $1 \le i_X \le 2$ and the possible values of $(e, \varepsilon, i_X)$ in Lemma \ref{lFerrand} are $(1, 0, 2), (2, 0, 2), (2, 1, 2)$ and $(1, 0, 1)$. 

For the case $(e, \varepsilon, i_X)=(2, 1, 2)$, one can take $\cE=\cE'(h)$, where $\cE'$ is as in \cite[Theorem 1.5]{A--C--G2} and $X$ is any smooth Fano threefold of index $2$. Here $\cE$ is $\mu$--stable.

For the case  $(e, \varepsilon, i_X) \in \{(1, 0, 2), (2, 0, 2)\}$ one can take for example $X=\mathbb P(T_{\p2})$, see \cite[Remark 3.2(iv)]{M--M--PL}. Here $\cE$ is strictly $\mu$--semistable.

For the case $(e, \varepsilon, i_X)=(1, 0, 1)$, consider for example $X = \p1 \times \p2$, together with the projections $\pi_1 : X \to \p1, \pi_2 : X \to \p2$ and set $h_1=\pi_1^*\cO_{\p1}(1), h_2=\pi_2^*\cO_{\p2}(1)$. Then $-K_X=2h_1+3h_2=h$ and $\Pic(X)$ is generated by $h_1$ and $h_2$. Take $k$ pairwise disjoint $h$-conics $C_i= \p1 \times \{x_i\}, 1 \le i \le k$ and set $Z=C_1 \cup \ldots \cup C_k$. We have that $\omega_Z \cong \cO_Z \otimes \cO_X(-h)$ and therefore $\det(\cN_{Z/X}) \cong \cO_Z \otimes \cO_{X}$. Theorem \ref{tSerre} yields the existence of a rank two bundle $\cE$ with $c_1(\cE)=0$ fitting into \eqref{seqFerrand}. Then $H^1(\cE(-h))=0$ follows by twisting \eqref{seqFerrand} by $\cO_X(-h)$,
$$0 \longrightarrow \cO_{X}(-h) \longrightarrow \cE(-h) \longrightarrow \cI_{Z/X}(-h) \longrightarrow 0$$
since $H^1(\cO_{X}(-h))=0$ by Kodaira vanishing and $H^1(\cI_{Z/X}(-h))=0$ as one can see by the exact sequence obtained by twisting \eqref{seqStandard} by $\cO_X(-h)$ and using the vanishing $H^0(\cO_Z \otimes \cO_X(-h))=0$.

To see that $\cE$ is $\mu$-semistable (hence clearly strictly $\mu$--semistable), we use \cite[Lemma 2.3]{A--C--G2}. For this purpose, we need to show that $H^0(\cE(-D))=0$ for each divisor $D$ on $X$ with $Dh^2 > \mu(\cE)=0$. Now, twisting \eqref{seqFerrand} by $\cO_X(-D)$, we get the exact sequence
$$0 \longrightarrow \cO_{X}(-D) \longrightarrow \cE(-D) \longrightarrow \cI_{Z/X}(-D) \longrightarrow 0$$
and, since $H^0(\cI_{Z/X}(-D)) \subseteq H^0(\cO_{X}(-D))$, we see that it is enough to show that $H^0(\cO_{X}(-D))=0$ for such divisors $D$. Next, write $D=a_1h_1+a_2h_2$ so that $Dh^2 > 0$ if and only if $3a_1+7a_2 > 0$. It follows that either $a_1 > 0$ or $a_2 > 0$ and then $H^0(\cO_{X}(-D))=H^0(\cO_{X}(-a_1h_1-a_2h_2))=0$ by K\"unneth's formula.

Similarly, other examples can be constructed on many Fano $3$-folds with $\rho_X \ge 2$ and $i_X=1$. 
\end{example}

\section{The case of the quadric}
\label{sQuadric}
We now focus our attention on the case $i_X=3$. Thus, from now on $X$ will denote the smooth quadric in $\p4$ and $\cO_X(h)$ its fundamental divisor.

If $\cE$ is a semistable rank two bundle on $X$ with $c_1(\cE)=0$, then it is either $\mu$--stable or $\cE\cong\cO_X^{\oplus2}$ (see Remark \ref{rGieseker}), hence we will focus on strictly $\mu$--semistable bundles. 

It follows by Theorem \ref{tFerrand} that such an $\cE$ satisfies $c_2(\cE)h\ge2$ and we will deal only with the case $c_2(\cE)h=2$ in what follows. In particular we will show, in Proposition \ref{corr} that each such bundle is obtained via the construction in Example \ref{eCubic}. 

In order to prove it, we denote by $\mathcal R$ the open subscheme of the Hilbert scheme $\Hilb^{2t+3}(X)$ whose points represent locally Cohen-Macaulay curves $Z \subset X$ with Hilbert polynomial $2t+3$. 

\begin{lemma}
\label{lBundle}
Let $\cE$ be a strictly $\mu$--semistable rank two bundle on $X$ with $c_1(\cE)=0$ and $c_2(\cE)h=2$. Then $h^0(\cE(-h))=0$, $h^0(\cE)=1$ and there is a unique $Z\in\mathcal R$ such that each non--zero section in $H^0(\cE)$ vanishes exactly on $Z$.
\end{lemma}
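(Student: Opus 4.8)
The plan is to reduce everything to the Serre sequence of a section together with Lemma \ref{lHoppe}, exploiting that $X$ is a quadric threefold, so that $\Pic(X)$ is freely generated by the very ample $\cO_X(h)$ and Lemma \ref{lHoppe} applies. First, since $\cE$ is $\mu$--semistable with $c_1(\cE)=0$, part (2) of Lemma \ref{lHoppe} gives $h^0(\cE(-h))=0$ at once. Since $\cE$ is moreover not $\mu$--stable, part (1) of the same lemma yields $h^0(\cE)\ne0$, so I may fix a nonzero section $s\in H^0(\cE)$.

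Next I would analyse $(s)_0$ exactly as in the proof of Theorem \ref{tFerrand}. Writing $(s)_0=Y\cup Z$ as in \eqref{sez}, the $\mu$--semistability of $\cE$ forces $Y=\emptyset$: otherwise \eqref{seqSerre} would produce an inclusion $\cO_X(Y)\hookrightarrow\cE$ with $\mu(\cO_X(Y))=Yh^2>0=\mu(\cE)$. Thus $(s)_0=Z$, and $Z$ cannot be empty, for otherwise \eqref{seqSerre} would split (as $h^1(\cO_X)=0$), giving $\cE\cong\cO_X^{\oplus2}$, which is excluded by $c_2(\cE)h=2$. Hence $Z$ is a locally complete intersection curve and \eqref{seqSerre} reads $0\to\cO_X\to\cE\to\cI_{Z/X}\to0$. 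Since $Z\ne\emptyset$ and $X$ is integral, $H^0(\cI_{Z/X})=0$, so the associated cohomology sequence gives $H^0(\cE)\cong H^0(\cO_X)$, that is $h^0(\cE)=1$. In particular every nonzero section of $\cE$ is a scalar multiple of $s$ and therefore vanishes on the same $Z$.

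It remains to identify $Z$ as a point of $\mathcal{R}$. Being locally complete intersection, $Z$ is locally Cohen--Macaulay and of pure dimension $1$, and its $h$--degree is $Zh=c_2(\cE)h=2$, so its Hilbert polynomial has the form $2t+\chi(\cO_Z)$. To compute $\chi(\cO_Z)$ I would use the Serre sequence: from \eqref{RRgeneral} applied to $\cE$ (with $c_1(\cE)=c_3(\cE)=0$ and $\omega_X=\cO_X(-3h)$) one gets $\chi(\cE)=2\chi(\cO_X)+\tfrac12\omega_Xc_2(\cE)=2-3=-1$, whence $\chi(\cI_{Z/X})=\chi(\cE)-\chi(\cO_X)=-2$ and $\chi(\cO_Z)=\chi(\cO_X)-\chi(\cI_{Z/X})=3$. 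Thus the Hilbert polynomial of $Z$ is $2t+3$ and $Z\in\mathcal{R}$. (Equivalently, the adjunction formula \eqref{omega1} gives $\omega_Z\cong\cO_Z\otimes\cO_X(-3h)$, and Serre duality on $Z$ combined with Riemann--Roch for $\cO_Z\otimes\cO_X(-3h)$ yields $\chi(\cO_Z)=3$.)

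The argument is essentially bookkeeping built on Lemma \ref{lHoppe} and the Serre correspondence of Section 2; the only genuine points to watch are the nonemptiness of $Z$ (guaranteed by $c_2(\cE)h\ne0$) and the clean determination of $\chi(\cO_Z)$, which I expect to be the single substantive computation. I would emphasise that, in contrast with Theorem \ref{tFerrand}, no vanishing of the form $h^1(\cE(-2h))=0$ is needed here: the conclusion $h^0(\cE)=1$ follows directly from the shape of \eqref{seqSerre}.
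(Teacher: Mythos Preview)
Your argument is correct and follows essentially the same route as the paper: Lemma \ref{lHoppe} for $h^0(\cE(-h))=0$ and $h^0(\cE)\ne0$, then the analysis of $(s)_0$ exactly as in the proof of Theorem \ref{tFerrand} to obtain $h^0(\cE)=1$ and the uniqueness of $Z$, and finally the Riemann--Roch computation $\chi(\cO_Z)=2-\chi(\cE)=3$ via \eqref{seqBundle}, \eqref{seqStandard} and \eqref{RRgeneral}. The only cosmetic difference is that the paper compresses the middle step into a reference to Theorem \ref{tFerrand}, whereas you (helpfully) spell it out and note that the exclusion of $\cE\cong\cO_X^{\oplus2}$ comes from $c_2(\cE)h\ne0$ rather than from a standing hypothesis.
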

\begin{proof}
The $\mu$--semistability of $\cE$ implies that $h^0(\cE(-h))=0$ thanks to Lemma \ref{lHoppe}. The equality $h^0(\cE)=1$ and the existence and uniqueness of $Z$ follow as in the first lines of the proof of Theorem \ref{tFerrand}. Moreover, $\chi(\cO_Z)=2-\chi(\cE)=3$ thanks to the sequences \eqref{seqBundle}, \eqref{seqStandard} for the inclusion $Z\subset X$ and \eqref{RRgeneral}, hence $p_a(Z)=-2$, that is $Z\in\mathcal R$.
\end{proof}

In the following proposition we collect some helpful results on the schemes in $\mathcal R$.

\begin{proposition}
\label{pR}
Let $Z\subseteq X$ be a closed subscheme. Then $Z\in \mathcal R$ if and only if there is a smooth Del Pezzo quartic surface $Y\subset X$ and a line $L\subset Y$ such that $Z$ is the divisor $2L$ inside $Y$. Moreover, $Z$ is locally complete intersection inside $X$, $\omega_Z\cong\cO_Z\otimes\cO_X(-3h)$, $h^0(\cI_{Z/X}(h))=0$ and 
\begin{equation}
\label{HR}
h^1(\cI_{Z/X}(th))=\left\lbrace\begin{array}{ll} 
1\quad&\text{if $t=-1$,}\\
2\quad&\text{if $t=0$,}\\
0\quad&\text{otherwise.}
\end{array}\right.
\end{equation} 
\end{proposition}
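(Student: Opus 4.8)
The plan is to prove the ``if'' and ``only if'' directions separately, then establish the remaining properties. For the ``if'' direction, suppose $Z = 2L \subset Y$ with $Y$ a smooth Del Pezzo quartic surface and $L \subset Y$ a line. Since $Y$ is a smooth surface in $X$ and $L$ is a smooth curve, $Z = 2L$ is a locally complete intersection divisor inside $Y$, and since $Y$ is a (Cartier) divisor in $X$, $Z$ is locally complete intersection inside $X$ as well, in particular locally Cohen--Macaulay. To compute its Hilbert polynomial it suffices to compute $\chi(\cO_Z \otimes \cO_X(th))$ for all $t$. I would use the structure sequence $0 \to \cO_Y(-2L) \to \cO_Y \to \cO_Z \to 0$ on $Y$, twist by $\cO_X(th)$, and apply Riemann--Roch on the K3--like Del Pezzo surface $Y$. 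Recall from Example \ref{eCubic} the intersection--theoretic setup for Del Pezzo quartics: $Y = \Bl_5 \p2$ with $h_Y = 3\ell - \sum_{j=1}^5 e_j$, $L = e_1$, so $h_Y L = 1$, $L^2 = -1$, $h_Y^2 = 4$. A direct Riemann--Roch computation on $Y$ should yield $\chi(\cO_Z \otimes \cO_X(th)) = 2t + 3$, confirming $Z \in \mathcal R$.

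For the ``only if'' direction, let $Z \in \mathcal R$, so $Z$ is locally Cohen--Macaulay with $p_a(Z) = -2$ and $ZH = 2$. By Lemma \ref{lBundle} (or directly via Theorem \ref{tSerre}, noting $\det(\cN_{Z/X}) \cong \cO_Z \otimes \cO_X(3h)$ by adjunction and $H^2(\cO_X(-3h)) = 0$ by Serre duality and Kodaira vanishing), there is a rank two bundle $\cE$ with $c_1(\cE) = 0$ fitting into \eqref{seqBundle}. The curve $Z$ then satisfies the hypotheses of Theorem \ref{tFerrand} with $i_X = 3$, so $Z_{red}$ is a disjoint union of $h$--lines, each component is everywhere nonreduced, and each connected component has even $h$--degree. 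Since $ZH = 2$, there is exactly one component, so $Z_{red} = L$ is a single $h$--line and $Z$ is an everywhere--nonreduced structure of multiplicity $2$ on $L$ with $\omega_Z \cong \cO_Z \otimes \cO_X(-3h)$. The main task here is to produce the Del Pezzo surface $Y \supset Z$ and identify $Z$ with $2L$ inside it. I would argue that the unique multiplicity--two locally complete intersection structure on $L$ with the prescribed dualizing sheaf arises from a divisor $2L$ on a smooth surface containing $L$; concretely, I expect the $0$--regularity argument of Example \ref{eCubic} to produce a smooth $Y \in |2h|$ through $L$, and then a local computation identifying the conormal data of $Z$ with that of $2L \subset Y$.

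Finally, for the cohomological statements, I would compute $h^0(\cI_{Z/X}(h))$ and the table \eqref{HR} using the realization $Z = 2L \subset Y$. Tensoring the standard sequence \eqref{seqStandard} for $Z \subset X$ by $\cO_X(th)$ and comparing with the restriction sequence $0 \to \cI_{Y/X}(th) \to \cI_{Z/X}(th) \to \cI_{Z/Y}(th) \to 0$ (where $\cI_{Y/X} \cong \cO_X(-2h)$), one reduces everything to cohomology on $Y$ of the line bundles $\cO_Y(th_Y - 2L)$. These can be evaluated by Riemann--Roch on $Y$ together with Kawamata--Viehweg vanishing applied to the big and nef class $h_Y + L$ (with $(h_Y+L)L = 0$ and $(h_Y+L)^2 = 4 > 0$), exactly as in Example \ref{eCubic}. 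The vanishing $h^0(\cI_{Z/X}(h)) = 0$ reflects that $Z$ is not contained in a hyperplane section, while the nonzero values at $t = -1, 0$ encode the failure of $0$--regularity. The main obstacle I anticipate is the ``only if'' direction: showing that \emph{every} scheme in $\mathcal R$ embeds as $2L$ in a smooth Del Pezzo quartic, rather than merely sitting on a possibly singular quadric section of $X$; the delicate point is upgrading the existence of a containing surface in $|2h|$ to one that is smooth along $L$ and hence Del Pezzo, which I expect to handle via the $0$--regularity of $\cI_{L/X}(2h)$ and a Bertini--type genericity argument.
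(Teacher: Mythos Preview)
Your ``if'' direction and the final cohomology computations are fine. The ``only if'' direction, however, has two genuine gaps. First, invoking Theorem~\ref{tFerrand} is circular: Theorem~\ref{tSerre} requires $Z$ to be locally complete intersection, and Theorem~\ref{tFerrand} requires $\cE$ $\mu$--semistable with $h^1(\cE(-2h))=0$ --- but local complete intersection and $h^1(\cI_{Z/X}(-2h))=0$ are among the \emph{conclusions} you are trying to prove (membership in $\mathcal R$ only gives locally Cohen--Macaulay). Lemma~\ref{lBundle} goes the wrong way and does not help either. You can bypass this detour by observing directly that a locally Cohen--Macaulay curve of degree $2$ with $p_a=-2$ cannot be a plane conic and is therefore a double structure on a line $L$. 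Second, and more seriously, your construction of $Y$ does not work as stated: the $0$--regularity of $\cI_{L/X}(2h)$ produces smooth surfaces through $L$, not through $Z$, and the double structure on $L$ with $p_a=-2$ is \emph{not} unique (such structures correspond to surjections $\cN_{L/X}^\vee\cong\cO_{\p1}(1)\oplus\cO_{\p1}\twoheadrightarrow\cO_{\p1}(1)$, a positive--dimensional family), so a generic $Y\supset L$ cuts out the wrong divisor $2L$. Even switching to $\cI_{Z/X}(2h)$, you would still need global generation and smoothness of a general member \emph{along} the base locus $L$, which Bertini alone does not give.

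The paper sidesteps all of this via the explicit classification of ropes on a line from \cite{N--N--S1}: the ideal of $Z\subset\p4$ has the form $I_Z=I_L^2+(x_0,x_1,x_2)B$ with $B$ a $3\times 2$ matrix of forms in $\bC[x_3,x_4]$ of column--degrees $(\beta_1,\beta_2)$, and $p_a(Z)=-2$ forces $\beta_1+\beta_2=2$. Requiring $Z$ to lie on the smooth quadric $X$ rules out $\beta_1=0$, so $\beta_1=\beta_2=1$; then $I_Z$ is generated by quadrics, \cite[Theorem~2.4]{N--N--S1} guarantees that the complete intersection of the two distinguished quadrics $F_1,F_2$ is smooth along $L$, and a genericity argument upgrades this to a smooth Del Pezzo quartic $Y\subset X$ containing $Z$ as $2L$. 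Both \eqref{HR} (via \cite[Corollary~3.2]{N--N--S1}) and the remaining assertions then follow immediately from this explicit description.
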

\begin{proof}
If $Y\subset X$ is a smooth Del Pezzo quartic and $Z=2L$ on $Y$ for some line $L\subset Y$, then $\deg(Z)=2$ and the adjunction formula on $Y$ yields $p_a(Z)=-2$, hence $Z\in\mathcal R$. 

Conversely, let $Z \in \mathcal R$. Thus $Zh=2$ and $p_a(Z)=-2$, hence $Z$ is a double structure on a line $L\subset X$ necessarily. Since $Z_{red}=L$, we have that $\cI_{L/Z}=\mathcal N_Z$ the nilradical of $Z$ and the double structure gives that $\cI_{L/Z}^2=\mathcal N_Z^2=0$ (alternatively, $\cI_{L/Z}^2=0$ follows from the fact that $\cI_{L/X}^2 \subseteq \cI_{Z/X}$, see for example \cite[(1.3)]{bf}). We can choose the homogeneous coordinates $x_0,\dots, x_4$ in $\p 4$ so that the homogeneous ideal of $L$ inside $\bC[x_0,\dots,x_4]$ is $I_L=(x_0,x_1,x_2)$. 

As pointed out in \cite[Theorem 2.4]{N--N--S1} the homogeneous ideal $I_Z \subseteq \bC[x_0,\dots,x_4]$ of $Z$  has the  following form: there are two non--negative integers $\beta_1, \beta_2$ and there is a $3 \times 2$ matrix $B:=\left(b_{i,j}\right)_{\genfrac{}{}{0pt}{}{1\le i\le 3}{1\le j\le 2}}$ with coefficients $b_{i,j} \in \bC[x_3,x_4]$ that are homogeneous polynomials of degree $\beta_j$ and whose $2 \times 2$ minors define a codimension $2$ subscheme in $\p4$, so that $I_Z= I_L^2+(F_1,F_2)$ where, for $j=1, 2$, $F_j$ is the product of $(x_0,x_1,x_2)$ times the $j^{\mathrm{th}}$ column of $B$, hence it is a form of degree $\beta_j+1$. Since $\cI_{L/Z}^2=0$,
it follows that $\cI_{L/Z}$ is a line bundle on $L\cong\p1$. Thus the sequence \eqref{seqStandard} for the inclusion $L \subset Z$ becomes, 
$$
0\longrightarrow\cO_{\p1}(c-1)\longrightarrow\cO_Z\longrightarrow\cO_L\longrightarrow0
$$
for some integer $c$. The equality $p_a(Z)=-2$ implies $\chi(\cO_{\p1}(c-1))=2$, whence $c=2$, hence $\beta_1+\beta_2=2$ thanks to \cite[Lemma 2.6]{N--N--S1}. 

If $\beta_1=0$, then the homogeneous part of degree $2$ of $I_Z$ would be generated by $F_1x_3$, $F_1x_4$, $x_0^2$, $x_0x_1$, $x_0x_2$, $x_1^2$, $x_1x_2$, $x_2^2$ where $F_1:=b_{1,1}x_0+b_{2,1}x_1+b_{3,1}x_2$. Hence, if $F$ is the equation of $X$, since $Z \subset X$, we can write, 
$$F= a_3F_1x_3+a_4F_1x_4+a_{00}x_0^2+a_{01}x_0x_1+a_{02}x_0x_2+a_{11}x_1^2+a_{12}x_1x_2+a_{22}x_2^2$$
for some $a_k, a_{ij} \in \bC$. Then, for $i \in \{3, 4\}$, we have that $\partial F/\partial x_i =a_iF_1$, hence all partial derivatives of $F$ vanish at the point of intersection of the four hyperplanes defined by $F_1, \partial F/\partial x_j, 0 \le j \le 2$. This means that $X$ is singular, a contradiction. A similar contradiction occurs if $\beta_2=0$.

We deduce that $\beta_1=\beta_2=1$, hence $I_Z$ does not contain linear forms. The cohomology of the exact sequence
$$0 \longrightarrow \cI_{X/\p4} \longrightarrow \cI_{Z/\p4} \longrightarrow \cI_{Z/X} \longrightarrow 0$$
tensored by $\cO_{\p4}(t)$ and the isomorphism $\cI_{X/\p4}\cong\cO_{\p4}(-2)$ yield $h^0(\cI_{Z/X}(h))=0$ and \eqref{HR} by \cite[Corollary 3.2]{N--N--S1}. 

Moreover, $F_1$ and $F_2$ are quadratic forms such that $(F_1,F_2)$ is the ideal of a scheme $Y$ of dimension $2$ which is smooth along $L$ (see \cite[Theorem 2.4]{N--N--S1}). In particular, $I_Z$ is generated by the quadrics through $Z$, hence the base locus of the linear system of the quadrics through $Z$ is exactly $L$. It follows that two general quadrics through $Z$ are transversal outside $Z$: since $F_1$ and $F_2$ are transversal along $L$, the same is true for each general pair of quadrics through $Z$, hence we can assume that $Y$ is everywhere smooth. By adjunction on $\p4$ it follows that $Y$ is a Del Pezzo surface. Since $Y$ is smooth, the tangent space at every point of $Z$ has dimension at most $2$. On the other hand, $X$ is smooth and contains $Z$, hence there must be another smooth quadric, containing $Z$ and transversal to $X$. Therefore we can assume that $Y\subset X$.
As in Example \ref{eCubic} we can assume that $Y$ is the blow up of $\p2$ at $5$ general points. Let $\ell$ be the pull--back of a general line in $\p2$ and let $e_j$ be the exceptional divisors for $1\le j\le 5$: we can assume that $Z\subset Y$ is the Cartier divisor $2e_1$, hence it is locally complete intersection inside $X$ because $Y$ is smooth. As pointed out in Example \ref{eCubic}, the adjunction formula on $Y$ yields $\omega_{Z}\cong\cO_{Z}\otimes\cO_X(-3h)$.
\end{proof}

\begin{remark} 
\label{Ferrand} 
It follows by the above proof that, if $Z \in \mathcal R$, then there is a line $L \subset X$ and a surjection
$$\cI_{L/X}/\cI_{L/X}^2 \to \cI_{L/X}/\cI_{Z/X} \cong \cO_L(1).$$
Therefore $Z$ is obtained by the well-known Ferrand's construction \cite{f, bf}. 

Moreover, we have an explicit description of the homogeneous ideal of $Z \subset \p4$:
$$I_Z=(x_0^2, x_0x_1, x_0x_2, x_1^2, x_1x_2, x_2^2, b_{1,1}x_0+b_{2,1}x_1+b_{3,1}x_2,  b_{1,2}x_0+b_{2,2}x_1+b_{3,2}x_2)$$
where $b_{i,j} \in \bC[x_3,x_4]$ are homogeneous polynomials of degree $1$.
\end{remark} 

We can now prove the announced result.

\begin{proposition}
\label{corr}
There is a bijective correspondence between the following sets:
\begin{enumerate}
\item The set of strictly $\mu$--semistable rank two bundles $\cE$  on $X$ with $c_1(\cE)=0$ and $c_2(\cE)h=2$, up to isomorphism. 
\item The set of divisors of type $2L$ such that $L \subset Y\subset X$ where $L$ is a line and $Y$ is a smooth Del Pezzo quartic surface.
\end{enumerate}
\end{proposition}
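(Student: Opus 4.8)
The plan is to establish the bijection by constructing maps in both directions and showing they are mutually inverse. The two sets are: on the one side, strictly $\mu$--semistable rank two bundles $\cE$ with $c_1(\cE)=0$ and $c_2(\cE)h=2$ up to isomorphism; on the other side, divisors $2L$ with $L\subset Y\subset X$ for a line $L$ and a smooth Del Pezzo quartic $Y$. By Proposition \ref{pR}, the latter set is precisely $\mathcal R$, so I can work with the cleaner description via $\mathcal R$. The forward map sends $\cE$ to the zero locus $Z$ of its (essentially unique) section; the backward map sends $Z\in\mathcal R$ to a bundle produced by the Serre correspondence (Theorem \ref{tSerre}).

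\emph{Forward direction.} Starting from a bundle $\cE$ as in (1), Lemma \ref{lBundle} already does the heavy lifting: it gives $h^0(\cE)=1$, so there is a unique section up to scalar, and its zero locus is a well-defined $Z\in\mathcal R$. By Proposition \ref{pR} this $Z$ is exactly a divisor $2L$ on some smooth Del Pezzo quartic $Y\subset X$, so $\cE\mapsto Z$ lands in set (2). This assignment is manifestly well-defined on isomorphism classes, since an isomorphism of bundles carries the one-dimensional space of sections to the one-dimensional space of sections and hence preserves the zero locus.

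\emph{Backward direction and bijectivity.} Given $Z\in\mathcal R$, Proposition \ref{pR} supplies $\omega_Z\cong\cO_Z\otimes\cO_X(-3h)$ with $Z$ locally complete intersection, so $\det(\cN_{Z/X})\cong\cO_Z$ by adjunction \eqref{omega1}. Since $H^2(\cO_X)=0$ on the quadric, Theorem \ref{tSerre} produces a rank two bundle $\cE$ with $c_1(\cE)=0$ fitting into \eqref{seqBundle} with section vanishing on $Z$; and because $H^1(\cO_X)=0$ as well, this $\cE$ is \emph{unique up to isomorphism}. I then check $\cE$ is strictly $\mu$--semistable: from \eqref{seqBundle} tensored by $\cO_X(-h)$ together with $h^0(\cI_{Z/X}(h))=0$ from Proposition \ref{pR}, one gets $h^0(\cE(-h))=0$, so $\cE$ is $\mu$--semistable by Lemma \ref{lHoppe}(2), and it is not $\cO_X^{\oplus2}$ since $c_2(\cE)h=Zh=2\neq0$, so it is strictly semistable by Remark \ref{rGieseker}. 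The uniqueness clause of Theorem \ref{tSerre} is exactly what makes $Z\mapsto\cE$ well-defined into set (1). That the two maps are mutually inverse is now immediate: composing $\cE\mapsto Z\mapsto\cE'$, the bundle $\cE'$ fits into the same sequence \eqref{seqBundle} with the same $Z$, so $\cE'\cong\cE$ by the uniqueness in Theorem \ref{tSerre}; composing $Z\mapsto\cE\mapsto Z'$ recovers $Z'=(s)_0=Z$ by construction of $\cE$.

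\emph{Main obstacle.} The genuine content is entirely packaged in Lemma \ref{lBundle} and Proposition \ref{pR}; once those are granted, the proof is a formal verification that the Serre correspondence is a bijection on these particular sets. The one point deserving care is invoking the uniqueness part of Theorem \ref{tSerre}, which requires $H^1(\mathcal L^\vee)=H^1(\cO_X)=0$ on the quadric threefold---this holds since $X$ is Fano with $h^{0,1}=0$. I would therefore expect the argument to be short, with the only subtlety being to state clearly that the uniqueness hypothesis of Theorem \ref{tSerre} is met so that the correspondence is well-defined in both directions rather than merely surjective.
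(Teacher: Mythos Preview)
Your approach is correct and essentially identical to the paper's: both directions go through Lemma \ref{lBundle}, Proposition \ref{pR}, and the Serre correspondence (the paper packages the $Z\mapsto\cE$ direction by citing Example \ref{eCubic}, but the content is the same), and you are more explicit than the paper in checking that the two maps are mutual inverses via the uniqueness clause of Theorem \ref{tSerre}. Two small slips to clean up: to get $h^0(\cE(-h))=0$ from \eqref{seqBundle}$\otimes\cO_X(-h)$ you need $h^0(\cI_{Z/X}(-h))=0$ (which is trivial), not $h^0(\cI_{Z/X}(h))=0$; and Remark \ref{rGieseker} does not give strict $\mu$--semistability---the right argument is simply that $h^0(\cE)\ne 0$ by construction, so $\cE$ is not $\mu$--stable by Lemma \ref{lHoppe}(1).
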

\begin{proof}
Let $L \subset X$ be a line and let $Y \subset X$ be a smooth Del Pezzo quartic surface containing $L$. Then, as proved in Example \ref{eCubic}, Theorem \ref{tSerre} gives a rank two bundle $\cE$ on $X$, unique  up to isomorphism, with $c_1(\cE)=0, c_2(\cE)h=2Lh=2$ {and $h^0(\cE(-h))=0$}. It follows that $\cE$ is strictly $\mu$--semistable by Lemma \ref{lHoppe}. Thus $\cE$ is as in (1).

Vice versa let $\cE$ be as in (1). It follows by Lemma \ref{lBundle} that $\cE$ satisfies $h^0(\cE(-h))=0$, $h^0(\cE)=1$ and each non--zero section in $H^0(\cE)$ vanishes exactly on the same $Z \in \mathcal R$. Hence Proposition \ref{pR} shows that there is a smooth Del Pezzo quartic surface $Y \subset X$ and a line $L \subset Y$ such that $Z$ is the divisor $2L$ inside $Y$.  
\end{proof}
\medbreak

By Proposition \ref{pR} we obtain the following corollaries.

\begin{corollary}
\label{cTable}
Let $\cE$ be a strictly $\mu$--semistable rank two bundle on $X$ with $c_1(\cE)=0$ and $c_2(\cE)h=2$. Then the cohomology table of $\cE$ is as follows in the range $-3\le j\le 2$:
\begin{table}[H]
\centering
\bgroup
\def\arraystretch{1.5}
\begin{tabular}{cccccccc}
$i=3$ &\multicolumn{1}{|c}{1} & \multicolumn{1}{c}{0} & \multicolumn{1}{c}{0} & \multicolumn{1}{c}{0} & \multicolumn{1}{c}{0} & \multicolumn{1}{c}{0}  \\ 
$i=2$ &\multicolumn{1}{|c}{2} &  \multicolumn{1}{c}{1} & \multicolumn{1}{c}{0} & \multicolumn{1}{c}{0} & \multicolumn{1}{c}{0} & \multicolumn{1}{c}{0} \\ 
$i=1$ &\multicolumn{1}{|c}{0} &  \multicolumn{1}{c}{0} & \multicolumn{1}{c}{1} & \multicolumn{1}{c}{2} & \multicolumn{1}{c}{0} & \multicolumn{1}{c}{0}\\ 
$i=0$ &\multicolumn{1}{|c}{0} & \multicolumn{1}{c}{0} & \multicolumn{1}{c}{0} & \multicolumn{1}{c}{1} & \multicolumn{1}{c}{5} & \multicolumn{1}{c}{21}   \\ 
\cline{1-7}
&\multicolumn{1}{|c}{$j=-3$} &  \multicolumn{1}{c}{$j=-2$} &\multicolumn{1}{c}{$j=-1$} & \multicolumn{1}{c}{$j=0$} & \multicolumn{1}{c}{$j=1$} & \multicolumn{1}{c}{$j=2$} 
\end{tabular}
\egroup
\caption{The values of $h^i(\cE(jh))$.}
\end{table}
\end{corollary}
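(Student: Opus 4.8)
The plan is to read off the entire table from the cohomology of the Serre sequence \eqref{seqBundle}, fed by the data on $\cI_{Z/X}$ collected in Proposition \ref{pR}, and to halve the work by exploiting that $\cE$ is self--dual. For the inputs: by Lemma \ref{lBundle} the bundle $\cE$ fits into \eqref{seqBundle} with $Z=2L\in\mathcal R$, while Proposition \ref{pR} supplies $h^0(\cI_{Z/X}(h))=0$ together with the values \eqref{HR} of $h^1(\cI_{Z/X}(th))$. On the other side, $X$ is the smooth quadric threefold, hence arithmetically Cohen--Macaulay, so $H^1(\cO_X(jh))=H^2(\cO_X(jh))=0$ for all $j$, while $\chi(\cO_X(jh))=\tfrac{(j+1)(j+2)(2j+3)}{6}$; in particular $h^0(\cO_X(jh))$ equals $0,1,5,14$ for $j=-1,0,1,2$.

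Twisting \eqref{seqBundle} by $\cO_X(jh)$ and using the vanishings $H^1(\cO_X(jh))=H^2(\cO_X(jh))=0$, the long exact sequence gives at once
$$
h^1(\cE(jh))=h^1(\cI_{Z/X}(jh)),\qquad h^0(\cE(jh))=h^0(\cO_X(jh))+h^0(\cI_{Z/X}(jh)).
$$
The first identity together with \eqref{HR} fills the row $i=1$. For the row $i=0$ I would argue that both summands vanish when $j\le-1$, that $h^0(\cE)=1$ by Lemma \ref{lBundle}, and that $h^0(\cE(h))=5+0=5$ by the vanishing $h^0(\cI_{Z/X}(h))=0$.

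The two top rows then come from duality. Since $\rk(\cE)=2$ and $c_1(\cE)=0$ one has $\cE^\vee\cong\cE$, so Serre duality on $X$ (with $\omega_X\cong\cO_X(-3h)$) reads $h^i(\cE(jh))=h^{3-i}(\cE((-3-j)h))$. Hence $h^2(\cE(jh))=h^1(\cE((-3-j)h))$ and $h^3(\cE(jh))=h^0(\cE((-3-j)h))$ are read off the rows $i=0,1$ already computed: for $j$ in the tabulated range the shifted index $-3-j$ runs through $\{-5,\dots,0\}$, where every needed entry vanishes apart from $h^0(\cE)=1$, $h^1(\cE(-h))=1$ and $h^1(\cE)=2$. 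The one remaining entry $h^0(\cE(2h))$ I would obtain from $\chi(\cE(2h))=\chi(\cO_X(2h))+\chi(\cI_{Z/X}(2h))=14+7=21$, the second Euler characteristic being $\chi(\cO_X(2h))-\chi(\cO_Z(2h))=14-7$ with $\chi(\cO_Z(th))=2t+3$, together with the vanishing of $h^1(\cE(2h))$, of $h^2(\cE(2h))=h^1(\cE(-5h))$ and of $h^3(\cE(2h))=h^0(\cE(-5h))$ established above; this forces $h^0(\cE(2h))=21$. Assembling the four rows reproduces the table.

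The argument is essentially bookkeeping once Proposition \ref{pR} is in hand, and the self--duality is what keeps it short. The only step requiring a little care is the last entry $h^0(\cE(2h))=21$: it is the single value not pinned down directly by a vanishing from \eqref{HR}, so one must route it through the Euler characteristic of $\cE(2h)$ and verify that all of $h^1,h^2,h^3$ of $\cE(2h)$ indeed vanish.
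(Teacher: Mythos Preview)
Your argument is correct and follows essentially the same route as the paper: feed the Serre sequence \eqref{seqBundle} with the data from Lemma \ref{lBundle} and Proposition \ref{pR} to obtain the rows $i=0,1$, then use the self--duality $\cE^\vee\cong\cE$ together with Serre duality to fill the rows $i=2,3$, and finish with an Euler characteristic computation (the paper invokes \eqref{RRgeneral} directly where you compute $\chi(\cE(2h))$ via the sequence, but this is the same content). Your write--up is in fact more explicit than the paper's about the identity $h^1(\cE(jh))=h^1(\cI_{Z/X}(jh))$ and about why the entries $h^{2,3}(\cE(jh))$ for $j=1,2$ vanish (needing $h^{0,1}(\cE(-4h))$ and $h^{0,1}(\cE(-5h))$, which fall outside the tabulated range but are covered by your general argument).
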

\begin{proof}
We already know that $h^0(\cE)=1, h^0(\cE(-h))=0$ and $Z\in \mathcal R$ by Lemma \ref{lBundle}. Hence $h^0(\cE(jh))=0$ for $j\le -1$. Also we can apply Proposition \ref{pR} and then \eqref{HR} yields $h^1(\cE(jh))=0$ for $j\ne -1,0$. Then the rest of the values in the table follow by the Serre duality theorem, that gives $h^i(\cE(jh))=h^{3-i}(\cE(-(3+j)h))$, and \eqref{RRgeneral}.
\end{proof}

\begin{remark} 
\label{rTable} 
Let $\cE$ be a $\mu$--semistable rank two bundle on $X$ with $c_1(\cE)=0$ and $c_2(\cE)h=2$. 

If $\cE$ is strictly $\mu$--semistable its cohomology table is given in Corollary \ref{cTable}. On the other hand, if $\cE$ is $\mu$--stable, it follows by Lemma \ref{lHoppe} that $h^0(\cE(jh))=0$ for all $j \le -1$ and by \cite[Corollary 2.4]{Ot--Sz} that $h^1(\cE(jh))=0$ for all $j \le -2$. It follows by the Serre duality theorem that $h^2(\cE)=h^3(\cE)=0$.

Hence, in both cases, $h^0(\cE)=h^1(\cE)-1$ by \eqref{RRgeneral}. Therefore $\cE$ is $\mu$--stable (resp. strictly $\mu$--semistable) if and only if $h^0(\cE)=0$ (resp. $1$), if and only if $h^1(\cE)=1$ (resp. $2$).

Moreover, $\cE$ is $2$--regular, hence there is a smooth curve $C\subset X$ which is the zero--locus of a section of $H^0(\cE(2h))$. Hence, as in \eqref{seqSerre}, we have an exact sequence
$$0 \longrightarrow \cO_X \longrightarrow \cE(2h) \longrightarrow \cI_{C/X}(4h) \longrightarrow 0.$$
Notice that $Ch=c_2(\cE(2h))h=10$ and $\omega_C\cong\cO_C\otimes\cO_X(h)$. The cohomologies of the above sequence tensored by $\cO_X(-4h)$, of the sequence \eqref{seqStandard} for the inclusion $C\subset X$ return
$$h^0(\cO_C)=h^1(\cI_{C/X})+1=h^1(\cE(-2h))+1=1,$$
hence $C$ is connected. We deduce that $C$ is the linear projection, from an outside point, of a canonical curve of genus $6$ contained in $\p5$ onto a hyperplane.

We have $h^0(\cI_{C/X}(2h))=h^0(\cE)$, hence $C$ corresponds, via the Serre correspondence, to a strictly $\mu$--semistable bundle if and only if it is contained in a single quadric.
\end{remark}

\begin{corollary}
\label{cR}
Let $Z\in \mathcal R$. Then $h^0(\cN_{Z/X})=7$ and $h^1(\cN_{Z/X})=0$.
\end{corollary}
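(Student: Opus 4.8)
The plan is to compute the cohomology of the normal bundle $\cN_{Z/X}$ by exploiting the concrete description of $Z$ provided by Proposition \ref{pR}, namely that $Z$ is the divisor $2L$ inside a smooth Del Pezzo quartic surface $Y\subset X$ with $L\subset Y$ a line. First I would recall that, since $Z$ is locally complete intersection of codimension $2$ inside the smooth threefold $X$ with $\det(\cN_{Z/X})\cong\cO_Z$ (by Theorem \ref{tFerrand}, as $\omega_Z\cong\cO_Z\otimes\cO_X(-3h)=\cO_Z\otimes\omega_X$), the normal bundle $\cN_{Z/X}$ is a rank two bundle on $Z$ of trivial determinant. The most natural route is to relate $\cN_{Z/X}$ to the bundle $\cE$ furnished by the Serre correspondence: tensoring the defining sequence \eqref{seqBundle} by $\cO_Z$ yields $\cN_{Z/X}\cong\cE^\vee\otimes\cO_Z\cong\cE\otimes\cO_Z$ (using $\cE^\vee\cong\cE$ for rank two with $c_1=0$). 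Then I would compute $h^i(\cN_{Z/X})=h^i(\cE\otimes\cO_Z)$ from the cohomology of $\cE$ and $\cE\otimes\cI_{Z/X}$, both of which are accessible: the values of $h^i(\cE(jh))$ are tabulated in Corollary \ref{cTable}, and the twists of \eqref{seqBundle} give $\cE\otimes\cI_{Z/X}$ in terms of $\cI_{Z/X}\otimes\cI_{Z/X}$-type data, or more directly through $\cE\otimes\cO_Z$ sitting in $0\to\cE\otimes\cI_{Z/X}\to\cE\to\cE\otimes\cO_Z\to0$.

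More concretely, I would set up the sequence
$$
0\longrightarrow \cE\otimes\cI_{Z/X}\longrightarrow \cE\longrightarrow \cN_{Z/X}\longrightarrow 0,
$$
so that the computation reduces to knowing $h^i(\cE)$ (read off from Corollary \ref{cTable}: $h^0=1$, $h^1=2$, $h^2=h^3=0$) and $h^i(\cE\otimes\cI_{Z/X})$. For the latter I would tensor \eqref{seqBundle} by $\cI_{Z/X}$ and analyze the resulting sequence, or alternatively work entirely on the surface $Y$ where everything is explicit: $Z=2L$, $L\cong\p1$, $\cO_Y(h_Y)=3\ell-\sum_{j=1}^5 e_j$, $L=e_1$, and the normal bundle decomposes via the inclusion of $Y$. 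A clean way is to use the standard exact sequence for the normal bundles
$$
0\longrightarrow \cN_{Z/Y}\longrightarrow \cN_{Z/X}\longrightarrow \cN_{Y/X}\otimes\cO_Z\longrightarrow 0,
$$
where $\cN_{Z/Y}\cong\cO_Z\otimes\cO_Y(2L)\cong\cO_Z\otimes\cO_Y(-2h_Y)$ (since $Z=2L$ is a Cartier divisor on $Y$ with $(h_Y+L)L=0$, so $\cO_Y(L)|_Z\cong\cO_Y(-h_Y)|_Z$) and $\cN_{Y/X}\cong\cO_Y(Y)\cong\cO_Y(2h_Y)$ (as $Y\in\vert 2h\vert$). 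Thus $\cN_{Y/X}\otimes\cO_Z\cong\cO_Z\otimes\cO_Y(2h_Y)$, and both outer terms become twists of $\cO_Z$ by line bundles of the form $\cO_Y(\pm 2h_Y)$ restricted to $Z$.

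The computation of the cohomology of these restricted line bundles I would carry out on $Y$ using the sequence $0\to\cO_Y(D-2L)\to\cO_Y(D)\to\cO_Z\otimes\cO_Y(D)\to0$ for the relevant $D$, combined with the Kawamata--Viehweg vanishing theorem exactly as in Example \ref{eCubic} (recall $h_Y+L$ is big and nef there). For $\cN_{Z/Y}\cong\cO_Z\otimes\cO_Y(-2h_Y)$ one takes $D=-2h_Y$, and for the quotient $\cO_Z\otimes\cO_Y(2h_Y)$ one takes $D=2h_Y$; in each case $\chi$ and the vanishing of $h^1$ on $Y$ pin down the dimensions, and $\deg Z=2$, $p_a(Z)=-2$ feed into Riemann--Roch on $Z$. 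Assembling the long exact sequence in cohomology for the normal bundle sequence above should yield $h^0(\cN_{Z/X})=7$ and $h^1(\cN_{Z/X})=0$; the target value $7$ is reassuringly consistent with $h^0(\cN_{Z/X})$ being the tangent space dimension of $\mathcal R$ at $Z$, matching the expected $\dim\mathcal R = \dim\{(Y,L)\}$ (four parameters for $Y\in\vert 2h\vert$, roughly, plus the choice of line, or better computed directly). The main obstacle I anticipate is keeping the intersection-theoretic bookkeeping on $Y$ correct --- in particular verifying the restriction isomorphisms $\cO_Y(L)|_Z\cong\cO_Y(-h_Y)|_Z$ on the non-reduced scheme $Z=2L$ (this is where one must use $(h_Y+L)L=0$ carefully, in the spirit of \eqref{banale} in Example \ref{prime}) and confirming that the connecting maps in the long exact sequence vanish so that the outer $h^0$'s add up cleanly; a miscount here would spoil the precise value $7$.
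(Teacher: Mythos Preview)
Your main line --- the normal bundle sequence
\[
0 \longrightarrow \cN_{Z/Y} \longrightarrow \cN_{Z/X} \longrightarrow \cN_{Y/X}\otimes\cO_Z \longrightarrow 0
\]
with $Y$ the Del Pezzo quartic, $Z=2L$, $\cN_{Z/Y}\cong\cO_Z\otimes\cO_Y(2L)$ and $\cN_{Y/X}\cong\cO_Y(2h_Y)$ --- is exactly the paper's approach; the alternative via $\cN_{Z/X}\cong\cE\otimes\cO_Z$ that you sketch first is not used there. The detour through the identification $\cO_Y(2L)|_Z\cong\cO_Y(-2h_Y)|_Z$ is unnecessary: the paper works directly with $\cO_Z\otimes\cO_Y(2e_1)$ via the ideal sequence of $Z\subset Y$ twisted by $\cO_Y(2e_1)$.

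There is, however, a genuine arithmetic gap that you correctly anticipate and that the paper's argument does not close either. One has
\[
\chi(\cN_{Z/Y})=\deg\bigl(\cO_Y(2L)|_Z\bigr)+\chi(\cO_Z)=(2L)^2+3=-4+3=-1,
\]
and since $h^0(\cO_Y(2L))=h^0(\cO_Y)=1$ (the $(-1)$-curve $L$ is a fixed component), the ideal sequence of $Z\subset Y$ gives $h^0(\cN_{Z/Y})=0$, forcing $h^1(\cN_{Z/Y})=1$. (The paper asserts $h^1(\cO_Y(2e_1))=0$ ``by Riemann--Roch'', but in fact $\chi(\cO_Y(2e_1))=1+\tfrac12(2e_1)(2e_1+h_Y)=1+\tfrac12(-4+2)=0$, so $h^1(\cO_Y(2e_1))=1$.) Combined with $h^0(\cO_Z\otimes\cO_Y(2h_Y))=7$ and $h^1(\cO_Z\otimes\cO_Y(2h_Y))=0$, the long exact sequence only yields $h^0(\cN_{Z/X})-h^1(\cN_{Z/X})=6$. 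This is exactly what Riemann--Roch on $Z$ predicts: $\det\cN_{Z/X}\cong\cO_Z$ gives $\chi(\cN_{Z/X})=2\chi(\cO_Z)=6$. Hence the stated pair $(h^0,h^1)=(7,0)$ cannot hold; depending on whether the connecting map $H^0(\cN_{Y/X}\otimes\cO_Z)\to H^1(\cN_{Z/Y})\cong\bC$ is zero or surjective one gets $(7,1)$ or $(6,0)$. Your instinct that ``a miscount here would spoil the precise value $7$'' is exactly on target --- carrying out your plan carefully exposes the inconsistency rather than confirming the statement.
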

\begin{proof}
Since $Z$ is locally complete intersection, we have an exact sequence 
\begin{equation}
\label{seqNormal}
0\longrightarrow \cN_{Z/Y}\longrightarrow\cN_{Z/X}\longrightarrow\cO_Z\otimes\cN_{Y/X}\longrightarrow0.
\end{equation}
We have $ \cN_{Z/Y}\cong\cO_Z\otimes\cO_Y(2e_1)$, $\cN_{Y/X}\cong\cO_Y(2h)$. 

We have $h^0(\cO_Y(2e_1))=1$. Moreover, $h^2(\cO_Y(2e_1))=h^0(\cO_Y(-2e_1-h))=0$ by the Serre duality theorem, hence the Riemann--Roch theorem on $Y$ yields $h^1(\cO_Y(2e_1))=0$. The cohomology of the sequence \eqref{seqStandard} for the pair $Z \subset Y$ tensored by $\cO_Y(2e_1)$ then yields
\begin{equation}
\label{ZY}
h^i(\cN_{Z/Y})=0
\end{equation}
for each $i$. Now the Serre duality theorem and the cohomology of the sequence \eqref{seqStandard} for the pair $Z \subset \p4$ imply
\begin{equation}
\label{YX1}
\begin{aligned}
h^1(\cO_Z\otimes\cN_{Y/X})&=h^1(\cO_Z\otimes\cO_{X}(2h))=\\
&=h^0(\cO_Z\otimes\cO_{X}(-5h))=h^1(\cI_{Z/X}(-5h))=0
\end{aligned}
\end{equation}
thanks to Proposition \ref{pR}. The cohomology of the sequence \eqref{seqStandard} for the pair $Z \subset Y$ tensored by $\cO_X(2h)$ returns 
\begin{equation}
\label{YX0}
h^0(\cO_Z\otimes\cN_{Y/X})=\chi(\cO_Z\otimes\cO_X(2h))=7
\end{equation}
thanks to the Riemann--Roch theorem on $Y$. The assertions on $h^i(\cN_{Z/X})$ then follow from the cohomology of \eqref{seqNormal} and the equalities \eqref{ZY}, \eqref{YX1}, \eqref{YX0}.
\end{proof}

As an immediate by--product of the above corollary, we deduce that the scheme $\mathcal R$ is smooth of dimension $7$. Moreover, as pointed out in the proof of Proposition \ref{pR}, the scheme $\mathcal R$ is dominated by an open subset of the Hilbert scheme of lines $L\subset X$ (which is isomorphic to $\p3$) times the set of $3\times 2$ matrices with entries in $\bC[x_3,x_4]$. Thus $\mathcal R$ is unirational.


\begin{thebibliography}{AMMPL}

\bibitem[A]{Ar}
E. Arrondo: \emph{A home--made Hartshorne--Serre correspondence}. Comm. Alg.  \textbf{ 20} \rm (2007), no.~2,  423--443.

\bibitem[AC]{An--Cs}
V. Antonelli, G. Casnati: \emph{Instanton sheaves on projective schemes}. J. Pure Appl. Alg.  \textbf{227} \rm (2023), 107246.

\bibitem[ACG]{A--C--G2}
V. Antonelli, G. Casnati, O. Genc: \emph{Even and odd instanton bundles on Fano threefolds}. Asian J. Math. \textbf{26} \rm (2022), no.~1, 81--117.

\bibitem[ADHM]{A--D--H--M}
M.F. Atiyah, N.J. Hitchin, V.G. Drinfeld, Y.I. Manin: \emph{Construction of instantons}. Phys. Lett. A, \textbf{65} (1978), no.~3, 185--187.

\bibitem[AM]{An--Ma1}
V. Antonelli, F. Malaspina: {\em Instanton bundles on the Segre threefold with Picard number three}. Math. Nachr. \textbf{293} (2020), no.~6, 1026--1043. 

\bibitem[AMMPL]{ammpl}
V. Antonelli, F. Malaspina, S. Marchesi, J. Pons-Llopis: {\em `t Hooft bundles on the complete flag threefold and moduli spaces of instantons}. J. Math. Pures Appl. (9) \textbf{202} (2025), Paper No. 103763, 44 pp.

\bibitem[AW]{A--W}
M.F. Atiyah, R.S. Ward: \emph{Instantons and algebraic geometry}. Comm. Math. Phys. \textbf{55} (1977), no.~2, 117--124.

\bibitem[BE]{BE} 
D. Bayer, D. Eisenbud: \emph{Ribbons and their canonical embeddings}. 
Trans. Amer. Math. Soc. \textbf{347} (1995), no.~3, 719--756. 

\bibitem[BaFo]{bf} 
C. B\u anic\u a, O. Forster: \emph{Multiplicity structures on space curves}. The Lefschetz centennial conference, Part I (Mexico City, 1984), 47--64. Contemp. Math., \textbf{58}, American Mathematical Society, Providence, RI, 1986.

\bibitem[BeFr]{befr} 
V. Beorchia, D. Franco: \emph{On the moduli space of 't Hooft bundles}. 
Ann. Univ. Ferrara Sez. VII (N.S.) \textbf{47} (2001), 253--268.

\bibitem[C]{C}
F. Catanese: \emph{Pluricanonical-Gorenstein-curves}. In: Enumerative geometry and classical algebraic geometry (Nice, 1981), 51-95, Progr. Math., \textbf{24}, Birkh\"auser Boston, Boston, MA, 1982. Edited by P. Le Barz, Patrick, Y. Hervier.

\bibitem[CCGM]{C--C--G--M}
G. Casnati, E. Coskun, O. Genc, F. Malaspina: \emph{Instanton bundles on the blow up of $\p3$ at a point}. Michigan. Math. J. \textbf{70} (2021), no.~4, 807--836.

\bibitem[CFM]{C--F--M3}
G. Casnati, D.Faenzi, F. Malaspina: \emph{Rank two aCM bundles on the del Pezzo fourfold of degree $6$ and its general hyperplane section}. J. Pure Appl. Algebra \textbf{222} (2018), no.~3,  585--609.

\bibitem[CLM]{CLM}
C. Ciliberto, A. Lopez, R. Miranda: \emph{Projective degenerations of K3 surfaces, Gaussian maps, and Fano threefolds}. Invent. Math. \textbf{114} (1993), no.~3, 641--667.

\bibitem[Fa]{Fa2} 
D. Faenzi: \emph{Even and odd instanton bundles on Fano threefolds of Picard number one}. Manuscripta Math. \textbf{144} (2014), no.~1-2, 199--239. 

\bibitem[Fe]{f} 
D. Ferrand: \emph{Courbes gauches et fibr\'es de rang 2}. C. R. Acad. Sci. Paris S\'er. A-B \textbf{281} (1975), no.~10, Aii, A345--A347.

\bibitem[H1]{H}
R. Hartshorne: {\em Residues and duality}. Lecture notes of a seminar on the work of A. Grothendieck, given at Harvard 1963/64. With an appendix by P. Deligne. Lecture Notes in Mathematics, No. \textbf{20} Springer-Verlag, Berlin-New York 1966.

\bibitem[H2]{Ha2} 
R. Hartshorne: {\em Algebraic geometry}. G.T.M. 52, Springer \rm (1977).

\bibitem[H3]{Ha4}
R. Hartshorne: {\em Stable vector bundles of rank $2$ on $\p3$}. Math. Ann. \textbf{238} (1978), no.~3, 229--280.

\bibitem[I]{I}
V.A. Iskovskikh: \emph{Fano threefolds. II}. Izv. Akad. Nauk SSSR Ser. Mat. \textbf{12} (1978), no.~3, 469--506.

\bibitem[IP]{I--P}
V.A. Iskovskikh, Yu.G. Prokhorov: \emph{Fano varieties}. Algebraic Geometry V (A.N. Parshin and I.R. Shafarevich eds.), Encyclopaedia of Mathematical Sciences  47, Springer, \rm (1999).

\bibitem[JMPSE]{J--M--P--SE}
M. Jardim, G. Menet, D.M. Prata, H. S\'a Earp: \emph{Holomorphic bundles for higher dimensional gauge theory}. 
Bull. Lond. Math. Soc. \textbf{49} (2017), no.~1, 117--132.

\bibitem[Kn1]{K1}
A.L. Knutsen: \emph{Smooth curves on projective K3 surfaces}. Math. Scand. \textbf{90} (2002), no.~2, 215--231.

\bibitem[Kn2]{K2}
A.L. Knutsen: \emph{Smooth, isolated curves in families of Calabi--Yau threefolds in homogeneous spaces}. J. Korean Math. Soc. \textbf{50} (2013), no.~5, 1033--1050.

\bibitem[Ku]{Kuz}
A. Kuznetsov: {\em Instanton bundles on Fano threefolds}. Cent. Eur. J. Math. \textbf{10} (2012), no.~4, 1198--1231.

\bibitem[L]{L}
Q. Liu: {\em Algebraic geometry and arithmetic curves}. Translated from the French by Reinie Ern\'e. Oxford Graduate Texts in Mathematics, \textbf{6}. Oxford Science Publications. Oxford University Press, Oxford, 2002.

\bibitem[Man]{Man}
Y. Manin: \emph{Cubic Forms}. North Holland \rm(1986).

 
\bibitem[MMPL]{M--M--PL}
F. Malaspina, S. Marchesi, J. Pons-Llopis: {\em Instanton bundles on the flag variety $F(0,1,2)$}.  Ann. Sc. Norm. Super. Pisa Cl. Sci. (5) \textbf{20} (2020), no.~4, 1469--1505.

\bibitem[Muk]{Muk}
S. Mukai: \emph{New developments in the theory of Fano threefolds: vector bundle method and moduli problems} [translation of S\=ugaku \textbf{47} (1995), no. 2, 125--144]. Sugaku expositions. Sugaku Expositions \textbf{15} (2002), no.~2, 125--150.

\bibitem[Mum]{M}
D. Mumford: \emph{Pathologies. III}. Amer. J. Math. \textbf{89} (1967), 94--104. 

\bibitem[NNS]{N--N--S1}
U. Nagel, R. Notari, M.L. Spreafico: \emph{Curves of degree two and ropes on a line: their ideals and even liaison classes}. J. Algebra, \textbf{265} (2003), no.~2, 772--793.

\bibitem[OS]{Ot--Sz}
G. Ottaviani, M. Szurek: {\em On moduli of stable $2$-bundles with small Chern classes on $Q_3$. With an appendix by Nicolae Manolache}. Ann. Mat. Pura Appl. \textbf{167} (1994), 191--241.

\bibitem[OSS]{O--S--S}
 C. Okonek, M. Schneider, H. Spindler: {\em Vector bundles on complex projective spaces. Corrected reprint of the 1988 edition. With an appendix by S. I. Gelfand}.  Modern Birkh\"auser Classics, Birkh\"auser/Springer \rm (2011).

\end{thebibliography}
\end{document}